\newtheorem{theorem}{Theorem}[section]
\newtheorem{lemma}[theorem]{Lemma}
\newtheorem{proposition}[theorem]{Proposition}
\newtheorem{corollary}[theorem]{Corollary}
\theoremstyle{definition}
\newtheorem{definition}[theorem]{Definition}
\theoremstyle{remark}\newtheorem*{claim}{Claim} 
\theoremstyle{remark} 
\theoremstyle{remark}
\newcommand{\N}{\mathbb{N}}
\newcommand{\R}{\mathbb{R}}
\numberwithin{equation}{section}
\begin{document}
\title[On complemented copies of $c_0(\omega_1)$ in $C(K^n)$ spaces] {On complemented copies of $c_0(\omega_1)$ in $C(K^n)$ spaces}

%Information for first author

\author{Leandro Candido}
\address{University of S\~ao Paulo, Department of Mathematics, IME, Rua do Mat\~ao 1010,  S\~ao Paulo, Brazil}
\email{lcandido@impan.pl}
\thanks{The research of the first author was conducted while he was a postdoctoral
fellow at the Institute of Mathematics, Polish Academy of Sciences supported by FAPESP, process number 2013/20703-6.}

\author{Piotr Koszmider}
\address{Institute of Mathematics, Polish Academy of Sciences,
ul. \'Sniadeckich 8,  00-656 Warszawa, Poland}
\email{\texttt{piotr.koszmider@impan.pl}}
\thanks{The second author was partially supported by  grant
PVE Ci\^encia sem Fronteiras - CNPq, process number 406239/2013-4 } 

%    General info
\subjclass{Primary 46E15, 03E35, 54G12; Secondary 46B25, 03E65, 54B10}
%\date{January 1, 1994 and, in revised form, June 22, 1994.}

%\dedicatory{This paper is dedicated for those who will never have a paper dedicated for.}

\keywords{$C(K \times  K)$, scattered compact spaces, Ostaszewski's club axiom,
complemented copies of $c_0(\omega_1)$, injective tensor product, vector valued continuous functions.}

\begin{abstract} 
Given a compact Hausdorff space $K$ we consider the Banach space
of real continuous functions  $C(K^n)$ or
equivalently the $n$-fold  injective tensor product $\hat\bigotimes_{\varepsilon}C(K)$
or  the Banach space of vector valued continuous functions $C(K, C(K, C(K ..., C(K)...)$.
We address the question of the existence of complemented copies of $c_0(\omega_1)$
in $\hat\bigotimes_{\varepsilon}C(K)$ under the hypothesis that $C(K)$ contains an isomorphic copy of $c_0(\omega_1)$.
This is related to the results of
 E. Saab and P. Saab that $X\hat\otimes_\varepsilon Y$ contains a complemented copy of $c_0$,
if one of the infinite dimensional Banach spaces $X$ or $Y$ contains  
 a copy of $c_0$ 
and of  
 E. M. Galego and J. Hagler   that it follows from Martin's Maximum
 that if $C(K)$ has density $\omega_1$ and contains  a copy of 
$c_0(\omega_1)$, then $C(K\times K)$ contains a complemented copy $c_0(\omega_1)$.

The main result is that under the assumption of $\clubsuit$ for every $n\in \N$
there is a compact Hausdorff space $K_n$ of weight $\omega_1$ such that
$C(K)$ is
 Lindel\"of in the weak topology, $C(K_n)$ contains a copy
of $c_0(\omega_1)$,  $C(K_n^n)$ does not contain a complemented copy
of $c_0(\omega_1)$ while $C(K_n^{n+1})$ does  contain a complemented copy
of $c_0(\omega_1)$.
This shows that additional set-theoretic assumptions in Galego and Hagler's
nonseparable version of Cembrano and Freniche's theorem are necessary as well as
clarifies in the negative direction the matter unsettled in a paper of
 Dow, Junnila and  Pelant  whether  half-pcc  Banach spaces must be weakly pcc.

\end{abstract}

\maketitle

%%%%%%%%%%%%%%%%%%%%%%%%%%%%%%%%%%%%%
%%%%%%%%%%%%%%%%%%%%%%%%%%%%%%%%%%%%%

\section{Introduction}
Given a compact Hausdorff space $K$ the geometry of the Banach space $C(K\times K)$, besides
being interesting by itself, is additionally important
because $C(K\times K)$s form paradigmatic examples of Banach spaces of vector valued continuous functions
$C(K, C(K))$ and of the injective tensor products $C(K)\hat\otimes_\varepsilon C(K)$
and hence they are relevant to the investigations of the properties of these tensor products
$X\hat\otimes_\varepsilon Y$ in terms of the properties of $X$ and $Y$.
It is well known, by a surprising and celebrated result of P. Cembranos (\cite{Cem})
and F. Freniche (\cite{freniche}), that if $C(K)$ contains a copy of $c_0$ (i.e.,  $C(K)$
is infinite dimensional), then $C(K\times K)$ always contains a complemented copy of $c_0$. 
This result has been generalized by E. Saab and P. Saab (\cite{saabsaab}) to any tensor product
$X\hat\otimes_\varepsilon Y$ of infinite dimensional Banach spaces $X, Y$ where
one of them contains $c_0$ or even to spaces of compact operators
(\cite{ryan}).

A consistent nonseparable version of this result has been recently obtained
 by E. M. Galego and J. Hagler (\cite[Corollary 4.7]{GaHa}) who,
in particular,  proved that it is relatively consistent with ZFC
 that if $C(K)$ has density $\omega_1$ and $C(K)$ has a copy of $c_0(\omega_1)$, 
then $C(K\times K)$ has a complemented copy $c_0(\omega_1)$. 
Their proof relies on  Todorcevic's analysis of nonseparable Banach spaces 
(\cite[Corollary 6]{Todo}) under the assumption of
an additional set-theoretic axiom known as Martin's Maximum (\cite{ForMagShe}).
However A. Dow, H. Junnila and J. Pelant constructed in ZFC a Banach space
(of the form $C(K)$) of density $2^\omega$ which contains a copy of $c_0(\omega_1)$
but such that its injective tensor square has no complemented copy $c_0(\omega_1)$ (Example 2.16 of
\cite{DowJunPel}).
For more on complemented copies of $c_0(\omega_1)$ see \cite{argyros} and \cite{KosZie}.

In order to discuss our results, let us recall some terminology.
A Banach space $X$ is called {\sl weakly pcc} if and only if
any point-finite family of open sets in the weak topology on $X$ is countable and is called
{\sl half-pcc} if any point-finite family of half spaces (i.e., sets of the form
$\{x\in X: x^*(x)>a\}$ for some $a\in \R$ and $x^*\in X^*$) is countable.
This kind of chain conditions were first considered by Rosenthal in \cite{rosenthal} and
 are fundamental in the work of Dow, Junnila and Pelant (\cite{DowJunPel}).
If $X$ is a Banach space, then a sequence $(x_\xi^*)_{\xi<\omega_1}$ of elements
of the unit sphere of $X^*$ is called an {\sl $\omega_1$-Josefson-Nissenzweig sequence}
if and only if $(x_\xi^*(x))_{\xi<\omega_1}$ belongs to $c_0(\omega_1)$
for any $x\in X$. This notion plays a crucial role in the
result of Galego and Hagler and of Todorcevic mentioned above (\cite{GaHa, Todo}).

\begin{theorem}[\cite{DowJunPel, GaHa}]\label{introequiv} Let $X$ be a Banach space. The following conditions are equivalent:
\begin{enumerate}
\item There is a linear bounded operator $T:X\rightarrow c_0(\omega_1)$
which has non-separable range,
\item There is an $\omega_1$-Josefson-Nissenzweig sequence in $X$,
\item $X$ is not half-pcc.
\end{enumerate}
\end{theorem}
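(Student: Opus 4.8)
The plan is to prove the cycle of implications $(1)\Rightarrow(3)\Rightarrow(2)\Rightarrow(1)$, with the whole argument resting on one elementary observation about bounded operators into $c_0(\omega_1)$. If $T\colon X\to c_0(\omega_1)$ is bounded and $\pi_\xi$ denotes the $\xi$-th coordinate functional on $c_0(\omega_1)$, put $x^*_\xi:=\pi_\xi\circ T\in X^*$, so that $T(x)=(x^*_\xi(x))_{\xi<\omega_1}\in c_0(\omega_1)$ for every $x\in X$. The claim is that $T$ has non-separable range if and only if $A_\varepsilon:=\{\xi<\omega_1:\|x^*_\xi\|\ge\varepsilon\}$ is uncountable for some $\varepsilon>0$. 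The forward direction is immediate by contraposition: if each $A_{1/n}$ is countable then $S:=\bigcup_n A_{1/n}$ is countable, $x^*_\xi=0$ for $\xi\notin S$, and $T(X)$ lies in the separable subspace of $c_0(\omega_1)$ of functions supported on $S$. For the converse, fix $\varepsilon>0$ with $A_\varepsilon$ uncountable and choose $y_\xi$ in the unit ball of $X$ with $x^*_\xi(y_\xi)>\varepsilon/2$ for $\xi\in A_\varepsilon$; if $D\subseteq T(X)$ were countable and dense, some single $d\in D$ would lie within $\varepsilon/4$ of $T(y_\xi)$ for uncountably many $\xi\in A_\varepsilon$, forcing $|d(\xi)|>\varepsilon/4$ for uncountably many $\xi$ and contradicting $d\in c_0(\omega_1)$.

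Granting this observation, the two easy implications are short. For $(1)\Rightarrow(3)$, take $T$ with non-separable range, form $x^*_\xi$ and $A_\varepsilon$ as above, fix $\varepsilon$ with $A_\varepsilon$ uncountable, and consider the half-spaces $H_\xi:=\{x\in X:x^*_\xi(x)>\varepsilon/2\}$ for $\xi\in A_\varepsilon$. Each $H_\xi$ is non-empty since $x^*_\xi\ne 0$, and the family is point-finite because $x\in H_\xi$ forces $|T(x)(\xi)|>\varepsilon/2$ while $T(x)\in c_0(\omega_1)$; as the $H_\xi$ are non-empty and the family is point-finite, each half-space equals $H_\xi$ for only finitely many $\xi$, so $\{H_\xi:\xi\in A_\varepsilon\}$ is an uncountable point-finite family of half-spaces and $X$ is not half-pcc. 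For $(2)\Rightarrow(1)$, given an $\omega_1$-Josefson--Nissenzweig sequence $(x^*_\xi)_{\xi<\omega_1}$ of unit vectors, define $T(x):=(x^*_\xi(x))_{\xi<\omega_1}$; the defining property of the sequence is exactly that $T$ maps $X$ into $c_0(\omega_1)$, and $\|T(x)\|_\infty=\sup_\xi|x^*_\xi(x)|\le\|x\|$ shows $T$ is bounded. Here $A_1=\omega_1$, so the observation gives that $T$ has non-separable range.

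The one implication with content is $(3)\Rightarrow(2)$. Start from an uncountable point-finite family of half-spaces; after removing the possible members equal to $X$ or $\emptyset$ and dividing each representing functional by its (then nonzero) norm, we may write the family as $H_\xi=\{x\in X:x^*_\xi(x)>a_\xi\}$, $\xi<\omega_1$, with the $H_\xi$ pairwise distinct and $\|x^*_\xi\|=1$; in particular the family is still uncountable and still point-finite. Evaluating point-finiteness at $x=0$ shows $\{\xi:a_\xi<0\}$ is finite, so after discarding it we may assume $a_\xi\ge 0$ for all $\xi$. Since $\omega_1$ is not a countable union of countable sets, there is $M\ge 1$ with $\{\xi:a_\xi\le M\}$ uncountable; restricting to this set and re-indexing by $\omega_1$, we may assume $0\le a_\xi\le M$ for every $\xi<\omega_1$. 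Now fix $x\in X$ and $\delta>0$: since $a_\xi\le M$, the set $\{\xi:x^*_\xi(x)>\delta\}$ is contained in $\{\xi:(M/\delta)x\in H_\xi\}$, which is finite by point-finiteness, and applying the same with $-x$ in place of $x$ shows $\{\xi:x^*_\xi(x)<-\delta\}$ is finite. As $\delta>0$ was arbitrary, $\{\xi:|x^*_\xi(x)|\ge\varepsilon\}$ is finite for every $\varepsilon>0$, i.e. $(x^*_\xi(x))_{\xi<\omega_1}\in c_0(\omega_1)$; combined with $\|x^*_\xi\|=1$ this says $(x^*_\xi)_{\xi<\omega_1}$ is an $\omega_1$-Josefson--Nissenzweig sequence, closing the cycle.

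I expect the only delicate point to be the reductions in $(3)\Rightarrow(2)$: one must check that normalizing the representing functionals and throwing away countably many half-spaces really leaves an uncountable, still point-finite family, so that the vectors $0$ and $(M/\delta)x$ may legitimately be tested against it, and one must scale the test vector correctly so that point-finiteness yields $c_0(\omega_1)$-decay at every level $\varepsilon>0$ rather than merely at one threshold. The two genuinely set-theoretic ingredients, the pigeonhole argument inside the observation and the fact that $\omega_1$ is not a countable union of countable sets, are entirely standard, so no serious obstacle beyond this bookkeeping is anticipated.
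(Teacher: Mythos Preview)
Your proof is correct and self-contained. The paper's own proof is much shorter because it proves only $(1)\Leftrightarrow(2)$ directly---essentially your observation about $A_\varepsilon$ together with your $(2)\Rightarrow(1)$---and then simply cites Theorem~1.6 of \cite{DowJunPel} for the equivalence of (1) and (3). Your argument instead works out $(1)\Rightarrow(3)$ and $(3)\Rightarrow(2)$ explicitly, the latter being the only place where something has to be done; the reductions (normalizing the representing functionals, testing point-finiteness at $0$ to force $a_\xi\ge 0$, pigeonholing to bound $a_\xi\le M$, and then scaling the test vector by $M/\delta$) are exactly right. What your approach buys is independence from the cited reference; what the paper's approach buys is brevity, since the equivalence $(1)\Leftrightarrow(3)$ is already in the literature.
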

 \begin{proof}
If $\phi_\alpha=T^*(\delta_\alpha)/\|T^*(\delta_\alpha)\|$, where $\delta_\alpha(f)=f(\alpha)$
for each $\alpha<\omega_1$ and $f\in c_0(\omega_1)$, we obtain that
$\phi_\alpha(x)=T(x)(\alpha)/\|T^*(\delta_\alpha)\|$. But for $\alpha$s from
an uncountable set $A\subseteq \omega_1$ the numbers $\|T^*(\delta_\alpha)\|$
are uniformly separated from zero. Hence $(\phi_\alpha)_{\alpha\in A}$ is
$\omega_1$-Josefson-Nissenzweig sequence in $X$. 

Given an $\omega_1$-Josefson-Nissenzweig sequence $(\phi_\alpha)_{\alpha<\omega_1}$
 in $X$ define an operator $T:X\rightarrow c_0(\omega_1)$ by
$T(x)=(\phi_\alpha(x))_{\alpha<\omega_1}$. By the Hahn-Banach theorem it has a nonseparable range.
The equivalence of (1) and (3) is proved in Theorem 1.6. of \cite{DowJunPel}.
\end{proof}

Note that the conditions above are in general much weaker than $C(K)$  containing a complemented 
copy of $c_0(\omega_1)$, but following the ideas of Cembranos and Freniche,
Galego and Hagler showed that if $X$ contains a 
copy of $c_0(\omega_1)$, then $C(K\times K)$ contains a complemented copy of 
$c_0(\omega_1)$ under one of the above conditions from \ref{introequiv}.
Actually, Todorcevic proved that under Martin's Maximum 
any nonseparable Banach space of density $\omega_1$  satisfies the above conditions, what gives 
the previously mentioned result of Galego and Hagler under Martin's Maximum.
The separable version of Todorcevic's result is the classical theorem of Josefson and Niessenzweig (\cite{josefson, nissenzweig})
which implies that on any infinite dimensional Banach space there is a linear operator with 
its range dense in $c_0$.

One of the goals of the research project leading to this
 paper was to decide if Galego and Hagler's nonseparable version of Cembranos  and Freniche's theorem 
and consequently the Saabs' theorem, is indeed  sensitive to 
its additional set-theoretic assumption, i.e., that one cannot obtain the same result without
making it.
We confirmed it by proving that  it is consistent that 
there exist a compact Hausdorff space $K$ such that the
density of the Banach space $C(K)$ is $\omega_1$ and $C(K)$ contains a copy
of $c_0(\omega_1)$ but $C(K\times K)$ does not 
contain a complemented copy of $c_0(\omega_1)$.
In fact we encountered two different kinds of examples of spaces satisfying the above statement.
We found one of them existing already in the literature (2.17 of \cite{DowJunPel}
discussed in Section 4 and denoted there $DJP_1$) 
after we have constructed our original space. The analysis of the differences between
these examples led us to a more delicate result:

\begin{theorem}\label{main} It is consistent that there are compact Hausdorff spaces 
$K_n$ for all $1\leq n\leq\omega$ such that all $C(K_n)$s contain a copy of  $c_0(\omega_1)$ and
$C(K^m_n)$ contains a  complemented copy of $c_0(\omega_1)$ for
$n\leq \omega$ and $m<\omega$  if and only if
$n<m$.
\end{theorem}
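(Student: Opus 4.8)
The plan is to separate off two bridging lemmas which reduce Theorem~\ref{main} to a $\clubsuit$-driven construction of the spaces $K_n$, and then to carry out that construction. The first bridging lemma is a monotonicity principle: \emph{if $L$ is a nonempty compact Hausdorff space and $C(L')$ contains a complemented copy of $c_0(\omega_1)$, then so does $C(L'\times L)$.} Using $C(L'\times L)\cong C(L')\hat\otimes_\varepsilon C(L)$, a bounded projection $P$ on $C(L')$ with range isomorphic to $c_0(\omega_1)$ produces a bounded projection $P\otimes\mathrm{id}_{C(L)}$ on $C(L')\hat\otimes_\varepsilon C(L)$ whose range, by injectivity of the $\varepsilon$-tensor norm, is isometric to $c_0(\omega_1)\hat\otimes_\varepsilon C(L)$; and $c_0(\omega_1)$ is complemented there via $\mathrm{id}_{c_0(\omega_1)}$ tensored with the evaluation at a point of $L$, the section being the tensor with $\lambda\mapsto\lambda\mathbf 1_L$. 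Taking $L'=K_n^m$, $L=K_n$, this says that ``$C(K_n^m)$ has a complemented copy of $c_0(\omega_1)$'' propagates upwards in $m$, equivalently that its negation propagates downwards. The second bridging lemma is the nonseparable Cembranos--Freniche/Saab--Saab principle behind Galego and Hagler's theorem: \emph{if $X$ contains a copy of $c_0(\omega_1)$ and $Y$ is not half-pcc, then $X\hat\otimes_\varepsilon Y$ contains a complemented copy of $c_0(\omega_1)$.} Fixing, via Theorem~\ref{introequiv}, an $\omega_1$-Josefson-Nissenzweig sequence $(y^*_\alpha)$ in $S_{Y^*}$, a sequence $(x_\alpha)\subseteq X$ equivalent to the unit vector basis of $c_0(\omega_1)$ with uniformly bounded biorthogonal functionals $(x^*_\alpha)\subseteq X^*$, and $y_\alpha\in B_Y$ with $y^*_\alpha(y_\alpha)>1/2$, one checks that the vectors $x_\alpha\otimes y_\alpha$ span a complemented copy of $c_0(\omega_1)$ in $X\hat\otimes_\varepsilon Y$, a projection being $u\mapsto(u(x^*_\alpha,y^*_\alpha))_{\alpha<\omega_1}\in c_0(\omega_1)$ followed by the obvious isomorphism.

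These two lemmas reduce the theorem to producing, under $\clubsuit$, compact spaces $K_n$ ($1\le n<\omega$) of weight $\omega_1$ with (a) $\omega_1$ isolated points, (b) $C(K_n^n)$ not half-pcc, and (c) $C(K_n^n)$ containing no complemented copy of $c_0(\omega_1)$, together with a compact $K_\omega$ of weight $\omega_1$ with $\omega_1$ isolated points such that $C(K_\omega^m)$ contains no complemented copy of $c_0(\omega_1)$ for any $m<\omega$. Indeed, (a) forces every $C(K_n^m)$ with $m\ge1$ to contain a copy of $c_0(\omega_1)$ (map $c_0(\omega_1)$ onto the functions supported on $\omega_1$ fixed isolated points), so the second lemma with $X=C(K_n)$, $Y=C(K_n^n)$ and (b) give that $C(K_n^{n+1})$ has a complemented copy of $c_0(\omega_1)$, which the first lemma propagates to all $m\ge n+1$; and (c) with the first lemma gives that $C(K_n^m)$ has none for $1\le m\le n$. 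Hence $C(K_n^m)$ has a complemented copy of $c_0(\omega_1)$ precisely when $n<m$, and for $n=\omega$ the requirement on $K_\omega$ says exactly that no finite power acquires one, i.e.\ that one appears iff $\omega<m$. (Note that the second lemma also forces $C(K_n^m)$ to be half-pcc for $m<n$, since otherwise $C(K_n^{m+1})$, and inductively $C(K_n^n)$, would carry a complemented copy of $c_0(\omega_1)$; once the $K_n$ are moreover arranged with $C(K_n)$ Lindel\"of in the weak topology but not weakly pcc, this supplies the negative answer to the Dow--Junnila--Pelant question.)

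The substantive task is the $\clubsuit$-construction. I would build each $K_n$ by a transfinite recursion of length $\omega_1$, growing a scattered, locally compact, locally countable space of bounded height with the $\omega_1$ isolated points placed at the ground level, and compactifying at the end; the space is to be so arranged that $C(K_n^m)^*$ carries an $\omega_1$-Josefson-Nissenzweig sequence exactly when $m\ge n$ (so $n$, and not fewer, coordinates are needed to produce one), and moreover that no such sequence for $m=n$ splits off a complemented copy of $c_0(\omega_1)$ from $C(K_n^n)$. At limit stages $\delta$ the $\clubsuit$-sequence $(A_\delta)$ is used to guess, cofinally often, the countable traces of an arbitrary candidate system for a complemented copy of $c_0(\omega_1)$ in $C(K_n^n)$ — an operator $C(K_n^n)\to c_0(\omega_1)$ with nonseparable range admitting a bounded right inverse, coded through finitely supported functionals on the isolated $n$-tuples — and one extends the space by attaching new limit points that render the putative right inverse unbounded while preserving (a). In parallel one plants, along a ladder-type $\omega_1$-family using all $n$ coordinates, the norm-one signed atomic measures on pairs of isolated $n$-tuples that by construction form an $\omega_1$-Josefson-Nissenzweig sequence in $C(K_n^n)^*$ but whose analogue cannot survive on lower powers; this part is essentially combinatorial and gives (b). For $K_\omega$ one runs the analogous recursion with the guessing aimed at $\omega_1$-Josefson-Nissenzweig sequences in all finite powers simultaneously, keeping every $C(K_\omega^m)$ half-pcc, hence free of complemented copies of $c_0(\omega_1)$.

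The main obstacle is to make (b) and (c) hold together at the fixed power $n$ — so that $C(K_n^n)$ carries an $\omega_1$-Josefson-Nissenzweig sequence yet admits no bounded projection onto any copy of $c_0(\omega_1)$ — while guaranteeing that the $(n+1)$-st power genuinely does acquire such a copy, so that the value $n$ is sharp and not an artifact, and all from a single $\clubsuit$-sequence. Since $\clubsuit$ only guesses countable subsets of countable ordinals, the recursion must be set up so that every potential complemented copy of $c_0(\omega_1)$ in $C(K_n^n)$ is reflected by countable approximations it can catch; this dictates that the isolated and the newly attached limit points be organised with finitely determined supports and that all relevant functionals have countable footprints, after which one must verify that destroying a guessed candidate damages neither (a), (b), the lower-power half-pcc property (hence does not, through the second lemma, resurrect the complemented copy one has just killed), nor the requirements on $K_\omega$. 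Keeping all of these compatible across every $n\le\omega$ and every finite power at once is where the real difficulty lies.
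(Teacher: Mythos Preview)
Your bridging lemmas are fine, but the reduction you draw from them sends you down a harder road than the paper takes, and you never actually arrive. You ask for $K_n$ with $C(K_n^n)$ \emph{not} half-pcc yet with no complemented copy of $c_0(\omega_1)$, so that your second lemma (applied with $X=C(K_n)$, $Y=C(K_n^n)$) produces the complemented copy only at power $n+1$. You then acknowledge that holding (b) and (c) together at the same power is ``where the real difficulty lies'' --- and the proposal stops there. The $\clubsuit$-construction you sketch (guessing candidate projections and spoiling their right inverses while simultaneously planting an $\omega_1$-Josefson--Nissenzweig sequence on $n$-tuples) is not carried out: there is no argument that the spoiling moves do not kill the JN sequence you are planting, nor that the planting does not accidentally produce a complemented copy, nor any description of what the attached limit points look like. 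As written, this is a plan without a construction.

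The paper sidesteps precisely this obstacle. It arranges $C(K_n^n)$ to \emph{be} half-pcc, so the ``no complemented copy for $m\le n$'' direction is free from Theorem~\ref{introequiv}. The device for this is the notion of an $(n{+}1)$-diverse space (Definition~\ref{diversespace}): Proposition~\ref{diversepcc} shows that $(n{+}1)$-diversity of $K$ forces $C(K^n)$ to be half-pcc. The complemented copy in $C(K_n^{n+1})$ is then obtained not through your Galego--Hagler bridging lemma but by a direct construction (Proposition~\ref{exists-complemented}): one builds $K_n$ together with an $n$-to-$1$ continuous surjection $K_n\setminus\{\infty\}\to[0,\omega_1)$, and uses alternating-sign combinations of Dirac measures on $(n{+}1)$-tuples, indexed by permutations of $n{+}2$ points, to write down an explicit projection of $C(K_n^{n+1})$ onto a copy of $c_0(\omega_1)$. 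The $\clubsuit$-recursion (Proposition~\ref{compactclub}) is then aimed only at making $K_n$ $(n{+}1)$-diverse while carrying the $n$-to-$1$ map --- a single, concrete combinatorial target --- rather than at the delicate coexistence you set yourself. For $K_\omega$ one simply lets the same recursion run with unbounded splitting, obtaining $n$-diversity for every $n$ (hence half-pcc in every finite power). Your proposal is missing both of these ideas: the $(n{+}1)$-diversity criterion that trivialises the negative half, and the $n$-to-$1$ map plus permutation-measure argument that supplies the positive half without any appeal to failure of half-pcc at power $n$.
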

\begin{proof}
$K_n$ is the  space constructed in Section 3 whose properties
are stated in Proposition \ref{compactclub}.  Since $K_n$ is $(n+1)$-diverse 
(see Definition \ref{diversespace}), by Proposition \ref{diversepcc}
 the space $C(K_n^n)$ is half-pcc and so by Theorem \ref{introequiv} $C(K_n^n)$ does not contain a complemented
copy of $c_0(\omega_1)$. On  the other hand by Proposition \ref{compactclub}
  there is an $n$-to-$1$ continuous map from 
$K_n\setminus\{\infty\}$ onto $[0,\omega_1)$ with the order topology, so $C(K_n^{n+1})$
contains a complemented copy of $c_0(\omega_1)$ by Proposition \ref{exists-complemented}.

$K_\omega$ is the  example from \ref{komega} or 
if we are not interested in  additional properties, which we state in Theorem
\ref{theoremlindelof}, the Example 2.17 of \cite{DowJunPel} which we call
$DJP_1$ and analyze in Section 4.  Since these are nonseparable scattered
compact spaces, they contain uncountable subspaces of isolated points, and so
$C(K_\omega)$ contains $c_0(\omega_1)$.  The properties of $K_\omega$
follow from Proposition \ref{komega} and Theorem \ref{weaklypccndiverse}.
The properties of $DJP_1$ follows from Corollary \ref{powerfullydjp} and 
Theorem \ref{introequiv}.
\end{proof}

For additional properties of the $K_n$s see Theorem \ref{theoremlindelof}.
In fact our constructions are generalizations of the space from \cite{KosZie} which can
serve above as $K_1$.
As we note in Corollary (\ref{powerfullywpcc}) that for $K$ compact scattered  if $C(K)$ is weakly pcc, then
$C(K^n)$ is weakly pcc for every $n$, our examples $K_n$ for $n<\omega$ from Theorem \ref{main}
give the following:

\begin{theorem} It is consistent that there are half-pcc spaces $C(K)$
of density $\omega_1$ which are not weakly pcc.
\end{theorem}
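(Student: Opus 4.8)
The plan is to take $K=K_1$, the scattered compact space of weight $\omega_1$ constructed in Section~3 and analysed in Proposition~\ref{compactclub}; it exists under $\clubsuit$, hence consistently with ZFC. First I would note that $C(K_1)$ has density $\omega_1$: the weight of $K_1$ is $\omega_1$, so the density of $C(K_1)$ is at most $\omega_1$, while $C(K_1)$ is nonseparable since it contains a copy of $c_0(\omega_1)$ by Theorem~\ref{main}. Next, $C(K_1)$ is half-pcc: $K_1$ is $2$-diverse by Proposition~\ref{compactclub}, so Proposition~\ref{diversepcc} applied with $n=1$ gives that $C(K_1^1)=C(K_1)$ is half-pcc.

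The heart of the matter is that $C(K_1)$ is \emph{not} weakly pcc, which I would prove by contradiction. Suppose $C(K_1)$ were weakly pcc. Since $K_1$ is compact and scattered, Corollary~\ref{powerfullywpcc} gives that $C(K_1^2)$ is weakly pcc, and hence half-pcc, because every half-space is weakly open. On the other hand, by Proposition~\ref{compactclub} there is a ($1$-to-$1$) continuous surjection of $K_1\setminus\{\infty\}$ onto $[0,\omega_1)$ with the order topology, so $C(K_1^2)$ contains a complemented copy of $c_0(\omega_1)$ by Proposition~\ref{exists-complemented}; composing the bounded projection onto this copy with a linear isomorphism yields a bounded operator from $C(K_1^2)$ onto $c_0(\omega_1)$, which has nonseparable range. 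By Theorem~\ref{introequiv} this forces $C(K_1^2)$ to fail half-pcc --- a contradiction. Hence $C(K_1)$ is half-pcc, has density $\omega_1$, and is not weakly pcc, which is the assertion. The same argument with $K_n^n$ in place of $K_1$ (finite products of scattered compacta are scattered compact, $C(K_n^n)$ is half-pcc by $(n+1)$-diversity of $K_n$ via Propositions~\ref{compactclub} and~\ref{diversepcc}, and $C((K_n^n)^2)=C(K_n^{2n})$ contains a complemented copy of $c_0(\omega_1)$ by Theorem~\ref{main} since $2n>n$) produces a further example for every $1\le n<\omega$.

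I do not foresee a real obstacle: all the difficulty is already absorbed into the cited results --- the $\clubsuit$-construction of the $K_n$ and Proposition~\ref{compactclub}, the diversity-implies-half-pcc Proposition~\ref{diversepcc}, the ``powerful'' behaviour of weak pcc under finite powers in Corollary~\ref{powerfullywpcc}, the production of complemented copies in Proposition~\ref{exists-complemented}, and the equivalences of Theorem~\ref{introequiv}. The two small points worth stating explicitly are that weakly pcc spaces are automatically half-pcc (half-spaces being weakly open), and that the mere presence of a \emph{non}-complemented copy of $c_0(\omega_1)$ inside $C(K_1)$ does not by itself contradict half-pcc, since half-pcc is not inherited by closed subspaces.
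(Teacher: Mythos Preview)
Your argument is correct and matches the paper's own reasoning: the paper derives the theorem from Theorem~\ref{main} together with Corollary~\ref{powerfullywpcc} (the contrapositive of ``weakly pcc passes to all finite powers''), exactly as you do with $K_1$ and then with $K_n^n$. Your write-up is slightly more explicit about the density computation and about why a complemented copy of $c_0(\omega_1)$ contradicts half-pcc via Theorem~\ref{introequiv}, but the route is the same.
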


This seem to be unknown until now (see page 1330 of  \cite{DowJunPel}), but we still do
not know if such examples can be constructed without any additional assumptions (see \ref{uselessdjp2}).
Among the three equivalent conditions of Theorem \ref{introequiv}  the topological notion of being half-pcc is the simplest.
Moreover in the case of scattered compact spaces it allows a simple topological criterion 
extracted in \cite{DowJunPel} from a paper of Arhangelskii and Tka\v cuk \cite{Arh}. 
Our refinements of these notions are the following:

\begin{definition}\label{diversepoint}
Let $K$ be a compact space, $m \in \mathbb{N}$ and let $F_1,\ldots,F_k$ a partition of $\{1,\ldots,m\}$. A point $(x_1,\ldots,x_{m})\in K^m$ is said to be \emph{$(F_1,\ldots,F_k)$-diverse} if $\{x_j: j\in F_i\}\cap \{x_j:j\not\in F_i\}=\emptyset$ for all $1\leq i \leq k$. 
\end{definition}

\begin{definition}\label{diversespace}
Let $K$ be a Hausdorff compact and $n\in \mathbb{N}$. We  say that $K$ is \emph{$n$-diverse} if for any given $m\in \N$ and for any partition $F_1,\ldots,F_k$ of $\{1,\ldots,m\}$ with $k \leq n$, any sequence $\{(x_1^{\alpha},\ldots,x^{\alpha}_{m})\}_{\alpha<\omega_1}\subseteq K^{m}$ of $(F_1,\ldots,F_k)$-diverse points admits a cluster point which is $(F_1,\ldots,F_k)$-diverse. 
\end{definition}

\begin{theorem}\label{weaklypccndiverse} Suppose that $K$ is a scattered compact space. 
$C(K)$ is weakly pcc if and only if $K$ is $n$-diverse for each $n\in \N$
\end{theorem}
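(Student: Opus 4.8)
The plan is to connect the combinatorial notion of $n$-diversity with the topological criterion for weak pcc-ness of $C(K)$ for scattered $K$ that was extracted in \cite{DowJunPel} from the Arhangelskii--Tka\v cuk paper \cite{Arh}. Recall that for a scattered compact $K$, a point-finite family of nonempty weakly open subsets of $C(K)$ of size $\omega_1$ can be replaced, after a standard reduction, by one consisting of basic weakly open sets determined by finitely many points of $K$ and a rational radius; shrinking to an uncountable subfamily we may assume a fixed cardinality $m$ of the supporting finite set, a fixed radius, and (by a $\Delta$-system argument on the supports and on the values the centers take on the root) that the family is indexed so that the $\alpha$-th set is determined by a point $(x_1^\alpha,\dots,x_m^\alpha)\in K^m$. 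So the first step is to carry out this reduction carefully and see exactly what combinatorial configuration of the tuples $(x_1^\alpha,\dots,x_m^\alpha)_{\alpha<\omega_1}$ corresponds to point-finiteness of the family.

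The key point is that point-finiteness of the weakly open family translates, via the functions in $C(K)$ that separate points, into a statement about which coordinates of the tuples coincide: the relevant data is a partition $F_1,\dots,F_k$ of $\{1,\dots,m\}$ recording which of the $m$ coordinates are ``the same'' point, together with the requirement that coordinates in different blocks are genuinely different, i.e. the tuples are $(F_1,\dots,F_k)$-diverse in the sense of Definition \ref{diversepoint}. A cluster point of such a sequence that is itself $(F_1,\dots,F_k)$-diverse yields, using continuity and the separation of points by continuous functions on the compact Hausdorff space $K$, a point of $C(K)$ lying in infinitely many members of the family, contradicting point-finiteness. Conversely, if $C(K)$ is not weakly pcc, the reduction produces for some $m$ and some partition a sequence of $(F_1,\dots,F_k)$-diverse points with no $(F_1,\dots,F_k)$-diverse cluster point; the number $k$ of blocks that actually occurs is bounded, and this is exactly where the parameter $n$ enters — one shows that an uncountable point-finite family forces $k$ to stay below some finite bound, so failure of $n$-diversity for all $n$ is equivalent to the existence of such a bad family. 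This gives both directions once the dictionary is set up.

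Concretely, I would organize the argument as two lemmas. First: if $K$ is $n$-diverse for every $n$, then every point-finite family of nonempty weakly open subsets of $C(K)$ is countable — proved by contraposition, taking an uncountable such family, running the $\Delta$-system and fixing-parameters reduction to get $(F_1,\dots,F_k)$-diverse tuples, extracting a $(F_1,\dots,F_k)$-diverse cluster point by $k$-diversity (note $k\le m$ so some fixed $n$ suffices), and exhibiting an element of $C(K)$ in infinitely many sets of the family. Second: if $K$ fails to be $n$-diverse for some $n$, then $C(K)$ is not weakly pcc — given $m$, a partition $F_1,\dots,F_k$ with $k\le n$, and $(F_1,\dots,F_k)$-diverse tuples with no $(F_1,\dots,F_k)$-diverse cluster point, build an uncountable point-finite family of weakly open sets, using for each $\alpha$ a function supported near $\{x_j^\alpha:j\le m\}$; the absence of a diverse cluster point is precisely what makes the family point-finite. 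The scatteredness of $K$ is used to guarantee the supporting sets can be taken to consist of isolated points (or to control the structure of $K$ enough to perform the reduction), as in the Arhangelskii--Tka\v cuk-style criterion.

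The main obstacle I anticipate is the bookkeeping in the reduction step: turning an arbitrary point-finite family of weakly open sets into a clean family of ``ball-type'' neighborhoods with a uniform partition pattern on their supports requires a somewhat delicate combination of a $\Delta$-system argument (on the finite supports), a pigeonhole on the finitely many possible equality-patterns among coordinates, a pigeonhole on rational radii and on the rational values prescribed on the $\Delta$-system root, and the use of scatteredness to replace arbitrary supports by supports in the set of isolated points. Once this normal form is in place, matching it to Definitions \ref{diversepoint} and \ref{diversespace} and reading off point-finiteness in both directions should be routine; I would want to be careful that the finitely many $m$ and $k$ arising are genuinely bounded so that a single instance of $n$-diversity does the job, which is the content of the phrase ``for each $n\in\N$'' in the statement.
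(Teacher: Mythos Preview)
Your plan is workable, but it is considerably more laborious than what the paper actually does, and it partly reproves results that are already available as black boxes.

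The paper's proof is essentially two lines. It quotes Lemma~2.13 and Proposition~2.2 of \cite{DowJunPel} to reduce the statement to the purely topological equivalence
\[
\bigl(\forall n\ K^n\setminus\Delta_n\text{ is }\omega_1\text{-compact}\bigr)\ \Longleftrightarrow\ \bigl(\forall n\ K\text{ is }n\text{-diverse}\bigr),
\]
and then checks this equivalence directly. The backward direction is immediate: a sequence in $K^n\setminus\Delta_n$ is a sequence of $(\{1\},\dots,\{n\})$-diverse points, and an $(\{1\},\dots,\{n\})$-diverse cluster point lies in $K^n\setminus\Delta_n$. For the forward direction, given $(F_1,\dots,F_k)$-diverse points $v_\xi=(x^1_\xi,\dots,x^m_\xi)$, pass to an uncountable subsequence on which the equality pattern among coordinates is constant, recorded by a finer partition $(A_1,\dots,A_l)$; the induced points $w_\xi\in K^l$ have all coordinates distinct, so $\omega_1$-compactness of $K^l\setminus\Delta_l$ gives a cluster point with all coordinates distinct, and copying these values back according to the $A_j$'s produces an $(F_1,\dots,F_k)$-diverse cluster point of the $v_\xi$'s. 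No $\Delta$-system, no analysis of basic weakly open sets, no scatteredness beyond what is already packaged in the cited results.

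By contrast, you propose to redo the Arhangel'skii--Tka\v cuk/Dow--Junnila--Pelant reduction from scratch: normalizing an uncountable point-finite family to basic weakly open sets with supports of a fixed shape, running a $\Delta$-system on those supports, and then matching the outcome to Definitions~\ref{diversepoint} and~\ref{diversespace}. That can be made to work, but it duplicates effort and introduces bookkeeping that the paper avoids entirely. One small conceptual slip to watch: you describe the partition $F_1,\dots,F_k$ as ``recording which of the $m$ coordinates are the same point'', but in Definition~\ref{diversepoint} coordinates in the same block are \emph{allowed} to coincide or not; the partition only forbids coincidences across blocks. It is the finer equality-pattern partition $(A_1,\dots,A_l)$ in the paper's argument that records actual coincidences, and distinguishing the two roles is exactly what makes the paper's short proof go through.
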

\begin{proof} By Proposition 2.2. and Lemma 2.13 of \cite{DowJunPel}
we need to prove that $K^n\setminus\Delta_n$ is $\omega_1$ compact for all $n\in \N$
if and only if $K$ is $n$-diverse for all $n\in \N$.  For the forward
implication given a sequence of $v_\xi=(x^1_\xi, ..., x^m_\xi)$ of $(F_1,\ldots F_k)$-diverse points in $K^m$
one can assume that there is a partition of $\{1,..., m\}$ into sets
$(A_k)_{k\leq l}$ for some $l\leq m$ such that the coordinates of the points $v_\xi$
 in the same part of the partition are equal.
 Form points $w_\xi$s of $K^l\setminus \Delta_l$ from 
the coordinates of $v_\xi$s in distinct $A_k$s. Use the $\omega_1$-compactness
to obtain an accumulation point of $w_\xi$s
in $K^l\setminus \Delta_l$ and use it to
form an $(F_1, ...,F_k)$-diverse accumulation point of $v_\xi$s. The backward implication is clear.
\end{proof}

We obtain the following:

\begin{theorem}\label{diverseintro}
Let $K$ be a compact, totally disconnected space and $n\in\N$.
Each of the following conditions implies the next.
\begin{enumerate} 
\item $K$ is $(n+1)$-diverse,
\item $K^n$ is half-pcc,
\item $C(K^n)$ contains no complemented  isomorphic copy of $c_0(\omega_1)$,
\item There is no point $\infty\in K$ such that $K\setminus\{\infty\}$
can be  mapped onto $[0,\omega_1)$ by an $(n-1)$-to-$1$ continuous map.
\end{enumerate}
\end{theorem}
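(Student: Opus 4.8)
The plan is to establish the chain of implications $(1)\Rightarrow(2)\Rightarrow(3)\Rightarrow(4)$ by treating each arrow separately, relying on the machinery already set up in the excerpt. For $(1)\Rightarrow(2)$, I would invoke Proposition~\ref{diversepcc} directly: it asserts exactly that if $K$ is $(n+1)$-diverse then $C(K^n)$ is half-pcc, and half-pcc-ness of a $C(L)$ space for $L$ compact is, by the topological criterion extracted from Arhangelskii--Tka\v cuk in \cite{DowJunPel}, equivalent to a chain condition on $L=K^n$ itself; so I would phrase $(2)$ as ``$K^n$ is half-pcc'' meaning $C(K^n)$ is half-pcc, and this step is essentially a citation. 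For $(2)\Rightarrow(3)$, I would apply Theorem~\ref{introequiv}: if $C(K^n)$ contained a complemented copy of $c_0(\omega_1)$, then the bounded projection composed with an isomorphism onto $c_0(\omega_1)$ would be a bounded operator $C(K^n)\to c_0(\omega_1)$ with nonseparable range, so condition (1) of Theorem~\ref{introequiv} would hold, forcing $C(K^n)$ to fail to be half-pcc by the equivalence (1)$\Leftrightarrow$(3) there. Contrapositive gives the implication.

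The real content is $(3)\Rightarrow(4)$, which I would prove in contrapositive form: assuming there is a point $\infty\in K$ and an $(n-1)$-to-$1$ continuous surjection $\varphi\colon K\setminus\{\infty\}\to[0,\omega_1)$, I must produce a complemented copy of $c_0(\omega_1)$ inside $C(K^n)$. The natural idea is that the fibers $\varphi^{-1}(\alpha)$ have size at most $n-1$, so for each $\alpha<\omega_1$ the $n$-fold product of the fiber together with the point at infinity gives configurations in $K^n$ of ``bounded diversity'', and one can build indicator-like continuous functions supported near these configurations. This is precisely the situation addressed by Proposition~\ref{exists-complemented}, which (from the way it is cited in the proof of Theorem~\ref{main}) states that if $K\setminus\{\infty\}$ admits an $n$-to-$1$ continuous map onto $[0,\omega_1)$ then $C(K^{n+1})$ contains a complemented copy of $c_0(\omega_1)$; reindexing $n\mapsto n-1$ gives exactly $(4)$ failing $\Rightarrow C(K^n)$ contains the complemented copy, i.e. $(3)$ failing. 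So this step, too, reduces to an already-available proposition, and the work of this theorem is the bookkeeping of the shifts in indices together with the observation that total disconnectedness of $K$ is what allows the relevant clopen separations to be performed in $K^n$ (so that $C(K^n)$ genuinely controls the combinatorics of $K^n$, and the projections in Proposition~\ref{exists-complemented} can be built from characteristic functions).

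I would therefore structure the proof as three short paragraphs, each discharging one arrow by citing, respectively, Proposition~\ref{diversepcc}, Theorem~\ref{introequiv}, and Proposition~\ref{exists-complemented}, with a sentence explaining why total disconnectedness is used (it guarantees $K$, hence $K^n$, has a base of clopen sets, which is needed both for the half-pcc criterion and for the explicit construction of the complemented $c_0(\omega_1)$). The main obstacle, conceptually, is making sure the index shift between ``$(n+1)$-diverse'', ``$K^n$'', ``$(n-1)$-to-$1$'' is consistent throughout and matches the statements of the cited propositions exactly; once the propositions from Sections~2 and~3 are in hand there is no substantial new argument here, since Theorem~\ref{diverseintro} is explicitly the ``totally disconnected, per-$n$'' distillation of the ingredients already assembled for Theorem~\ref{main}. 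If Proposition~\ref{diversepcc} or Proposition~\ref{exists-complemented} turned out to be stated only in a less convenient form, the fallback would be to redo the fiber-counting argument directly: partition $[0,\omega_1)$ into countably many pieces on which $\varphi$ has constant fiber size, and on the uncountable piece build a biorthogonal-type system $(f_\alpha, \mu_\alpha)$ in $C(K^n)\times C(K^n)^*$ with $f_\alpha$ a clopen indicator of a neighbourhood of the configuration $(\infty,\dots,\infty, \text{fiber points})$ and $\mu_\alpha$ a suitable finite combination of point masses, then check the $c_0(\omega_1)$-estimates and the boundedness of $x\mapsto(\mu_\alpha(x))_\alpha$ as in Cembranos--Freniche.
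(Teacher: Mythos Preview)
Your proposal is correct and follows exactly the paper's own proof: the three implications are discharged by citing Proposition~\ref{diversepcc}, Theorem~\ref{introequiv}, and the contrapositive of Proposition~\ref{exists-complemented}, respectively, with the index shift $n\mapsto n-1$ handled just as you describe. Your extra remarks on total disconnectedness and the fallback construction are not needed for the argument as the paper presents it, but they do no harm.
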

\begin{proof} (1) implies  (2) by Proposition \ref{diversepcc}.  (2) implies  (3) by Theorem \ref{introequiv}.
Proposition \ref{exists-complemented} yields the implication from (3) to (4).
\end{proof}
Our  examples $K_n$ for $n\leq \omega$, unlike the example 2.17 of \cite{DowJunPel}, have all these properties for a given $n\in \N$ and
none of these properties for bigger numbers. Another distinct feature of our examples is
the Lindel\"of property in the weak topology:

\begin{theorem}\label{theoremlindelof} It is consistent that
there are totally disconnected compact Hausdorff spaces $K_n$ satisfying
Theorem \ref{main} such that $C(K_n)$ is
Lindel\"of in the weak topology and 
$K$ is $(n+1)$-diverse and there is a point $\infty\in K$ such that $K\setminus\{\infty\}$
can be  mapped onto $[0,\omega_1)$ by an $n$-to-$1$ continuous map.
\end{theorem}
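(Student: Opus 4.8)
The plan is to verify that the spaces $K_n$ produced in Section~3 — whose combinatorial skeleton is described in Proposition \ref{compactclub} — can, under $\clubsuit$, be arranged so as to carry in addition the weak Lindel\"of property of $C(K_n)$, without disturbing the other requirements of Theorem \ref{main}. Concretely, I would fix $n\in\N$ and recall from Proposition \ref{compactclub} that $K_n$ is a scattered, totally disconnected compact space of weight $\omega_1$ with a distinguished point $\infty$, an $n$-to-$1$ continuous surjection $\pi\colon K_n\setminus\{\infty\}\to[0,\omega_1)$, and that $K_n$ is $(n+1)$-diverse; the $\clubsuit$-sequence is the device that, along a club of countable ordinals, glues together the branches so that no continuous operator $C(K_n)\to c_0(\omega_1)$ can have nonseparable range while keeping the fibres of $\pi$ of size exactly $n$. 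The two genuinely new clauses to establish are (a) the existence of the $n$-to-$1$ map onto $[0,\omega_1)$ (already part of Proposition \ref{compactclub}, so only a matter of citing it), and (b) that $C(K_n)$ is Lindel\"of in its weak topology.

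For clause (b) the natural route is the Corson-type criterion for scattered compacta: if $K$ is scattered of weight $\le\omega_1$ and the family of characteristic functions of clopen sets (equivalently, a suitable norming $\Sigma$-subset of the unit ball of $C(K)^*$) can be shown to sit inside a $\Sigma$-product of lines, then $C(K)$ is weakly Lindel\"of — indeed weakly $K$-analytic in good cases. So first I would recover from the explicit construction in Section~3 an $\omega_1$-indexed clopen algebra generating the topology of $K_n$, together with the observation that every point of $K_n$, being at finite Cantor--Bendixson rank over a countable "spine", has \emph{countable} support with respect to this algebra. This exhibits $K_n$ as (homeomorphic to) a subset of a $\Sigma$-product $\{0,1\}^{\omega_1}_{\Sigma}$, i.e.\ $K_n$ is Corson compact; for Corson compact $K$ it is classical (Gul'ko, Argyros--Mercourakis--Negrepontis) that $C(K)$ with the weak topology is Lindel\"of, which is exactly what is wanted. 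The point where care is needed is that $K_n$ is \emph{not} metrizable and the $\clubsuit$-gluing could in principle create points of uncountable support; so the real content of this step is checking that the amalgamation in Section~3 only ever identifies or adds points along countable initial segments of $[0,\omega_1)$, which is precisely what the $n$-to-$1$ structure of $\pi$ buys us.

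With (b) in hand the remaining clauses are immediate transcriptions of earlier results: $(n+1)$-diversity is Proposition \ref{compactclub}; that $C(K_n^n)$ has no complemented copy of $c_0(\omega_1)$ follows from $(n+1)$-diversity via Proposition \ref{diversepcc} and Theorem \ref{introequiv} (this is the argument already used in the proof of Theorem \ref{main}); that $C(K_n^{n+1})$ \emph{does} contain a complemented copy of $c_0(\omega_1)$ follows from the $n$-to-$1$ map onto $[0,\omega_1)$ via Proposition \ref{exists-complemented}; and that $C(K_n)$ contains a copy of $c_0(\omega_1)$ follows because $K_n$ is nonseparable scattered, hence has uncountably many isolated points. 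Finally, since weak Lindel\"ofness of $C(K_n)$ passes to $C(K_n^{n})$ only in the weaker "half-pcc" sense we have already recorded, I would note that we do \emph{not} claim $C(K_n^{m})$ is weakly Lindel\"of for $m>n$ — consistently with $C(K_n^{n+1})$ containing a complemented $c_0(\omega_1)$ — so there is no conflict.

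The main obstacle, as indicated above, is step (b): showing that the $\clubsuit$-driven construction of Section~3 genuinely yields a Corson compact space, i.e.\ that every point has countable support in the generating clopen algebra. Everything else is either a citation of Proposition \ref{compactclub} or a reuse of the implications already assembled in the proofs of Theorems \ref{main} and \ref{diverseintro}.
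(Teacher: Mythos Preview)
Your overall plan correctly identifies that everything except the weak Lindel\"of property of $C(K_n)$ is a direct citation of Proposition~\ref{compactclub} together with the implications already assembled for Theorems~\ref{main} and~\ref{diverseintro}. The divergence from the paper is entirely in how you handle clause~(b).

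You propose to show that $K_n$ is Corson compact (by exhibiting an $\omega_1$-indexed clopen algebra in which every point has countable support) and then invoke the classical fact that $C(K)$ is weakly Lindel\"of for Corson $K$. The paper does \emph{not} go this route. Instead it observes that for scattered $K$ the weak and pointwise topologies on $C(K)$ have the same Lindel\"of behaviour, and then applies Sokolov's theorem: for scattered $K$ with $K^{(\omega_1)}=\emptyset$, $C_p(K)$ is Lindel\"of if and only if $K$ is $\aleph_0$-monolithic. Monolithicity is then immediate from Proposition~\ref{compactclub}, since the closure of any countable set lies in some $[0,\beta]\cup\{\omega_1\}$ with $\beta<\omega_1$, which is countable scattered, hence metrizable.

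Your approach aims at a strictly stronger conclusion (Corson, hence by Alster's theorem Eberlein, since $K_n$ is scattered), and the sketch you give for it is not convincing as stated. The claim that ``every point has countable support because the amalgamation only happens along countable initial segments'' does not follow from the $n$-to-$1$ structure of $\phi$: in the construction of Section~\ref{sec:compact} the clopen sets $W^l_t(\gamma)$ are chosen with substantial freedom, and nothing in the inductive hypotheses prevents a fixed $x<\omega_1$ from lying in $W^l_t(\gamma)$ for uncountably many limit $\gamma>x$. So a generating clopen family built from the basic sets $V_r(\gamma+(i-1))$ need not be point-countable at $x$. One could perhaps add bookkeeping to the construction to force this, but that is extra work you have not outlined, and it is unnecessary: Sokolov's criterion requires only $\aleph_0$-monolithicity, which the construction already delivers for free via the closedness of the intervals $[0,\alpha+(n-1)]\cup\{\omega_1\}$.

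In short: your reduction of the theorem to ``prove clause~(b)'' is correct, but your proposed mechanism for (b) both overshoots the target and leaves the key verification unestablished; the paper's use of Sokolov's theorem is the missing idea.
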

\begin{proof} Applying Proposition \ref{compactclub} and Theorem \ref{diverseintro} we are left only with proving the
Lindel\"of property of $C(K_n)$ with the weak topology. 
First note that the Lindel\"of property in the weak topology
in $C(K)$ for $K$ scattered is equivalent to this property for the pointwise converegence topology.
Now the Lindel\"of property follows 
from the fact that $K_n^{(\omega_1)}=\emptyset$   by a theorem of
G. Sokolov (Theorem 2.3. of \cite{sokolov}) which says that  for
such scattered spaces the Lindel\"of property in $C_p(K)$ is equivalent to
$\aleph_0$-monolithicity of $K$ that is the property that closures of countable sets 
have countable networks. In our case the closures of
countable sets are included in sets
of the form $[0,\beta]\cup\{\omega_1\}$ 
for some countable ordinal $\beta$ (see Proposition \ref{compactclub}) 
and so are metrizable because they are countable and scattered.
\end{proof}

The Lindel\"of property in the weak topology is relevant here because
 it is proved in Proposition 1.16 of  \cite{DowJunPel} that a nonseparable
weakly Lindel\"of determined $C(K)$ cannot be half-pcc.
Also, as noted at the begining of Section 1 of  \cite{DowJunPel} the Lindel\"of property
in the case of  the weak topology is equivalent to being paracompact.
Thus our spaces $C(K_n)$ are paracompact in the weak topology
in contrast to  the space $C(DJP_1)$ from  \cite{DowJunPel} (see \ref{powerfullydjp}) which is
even not $\sigma$-metacompact with the weak topology since it is weakly pcc.

We obtain our consistent examples assuming the combinatorial
principle $\clubsuit$ of Ostaszewski (see \cite{Osta}, it is explained at the begining of
Section 2.) and we note that this principle  is
also sufficient to obtain the construction 2.17  from \cite{DowJunPel} originally
obtained from $\diamondsuit$ of R. Jensen (\cite{Kunen}).
The advantage of
$\clubsuit$ over $\diamondsuit$ is that the first principle is compatible with both
CH and its negation while $\diamondsuit$ implies CH.
This kind of examples cannot be obtained without additional set-theoretic
assumptions because under Martin's Maximum Banach spaces of density
$\omega_1$ map continuously and linearly into $c_0(\omega_1)$ with nonseparable ranges
(\cite{Todo}) and under the P-ideal dichotomy any  weakly Lindel\"of 
$C(K)$  space with $K^{(\omega_1)}=\emptyset$ containing an isomorphic
copy of $c_0(\omega_1)$ contains a complemented copy
of $c_0(\omega_1)$ (Theorem 3.2 of \cite{KosZie}). In this context, because $DJP_2$ (see section 4)
is a subspace
of separable scattered space, one should also recall 
 a result of R. Pol saying that if $K$ is separable, nonmetrizable, scattered of countable height,
then $C(K)$ is not Lindel\"of in the weak topology (Theorem 2 of \cite{pol}, see also \cite{dowsimon}). 
But we do
not know if dropping the requirement of $C(K)$ being Lindel\"of
in the weak topology one can construct in ZFC a $K$ of weight
continuum such that $C(K^n)$ contains complemented copies of
$c_0(\omega_)$  for some $n$s and not for others. Using
the notions of $n$-diverse spaces in the context of the ZFC example
2.16 of \cite{DowJunPel} called here  $DJP_2$ seem not to work
as in the case of $DJP_1$ which is shown in propositions \ref{djp2} and
\ref{uselessdjp2}.

We will denote by $\mathcal{L}(\omega_1)$ the set of all countable ordinals which are limit ordinals. 
The other notation is standard.

\section{$n$-to-1 maps onto $[0,\omega_1)$, $n$-diverse spaces and the pcc}

\begin{proposition}\label{exists-complemented}
Let $K$ be compact totally disconnected space and $\infty\in K$.
If there exists a continous surjective map $\phi:K\setminus\{\infty\} \to [0,\omega_1)$ 
such that $|\phi^{-1}[\{\alpha\}]|\leq n$ for all $\alpha<\omega_1$ and some $n\in \N$, where $[0,\omega_1)$ is
 endowed with the order topology, then 
$C(K^{n+1})$ contains a complemented copy of $c_0(\omega_1)$. In particular
$C(K^{n+1})$ is not half-pcc.

\end{proposition}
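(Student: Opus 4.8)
The plan is to produce inside $C(K^{n+1})$ an isometric copy of $c_0(\omega_1)$, spanned by characteristic functions $f_\alpha=\chi_{L_\alpha}$ of pairwise disjoint clopen sets $L_\alpha$, together with a uniformly bounded biorthogonal family of signed Radon measures $\mu_\alpha$ (so $\mu_\alpha(f_{\alpha'})=\delta_{\alpha\alpha'}$) with the property that $(\mu_\alpha(h))_{\alpha<\omega_1}\in c_0(\omega_1)$ for every $h\in C(K^{n+1})$. Given this, $T(h):=(\mu_\alpha(h))_\alpha$ defines a bounded operator $T\colon C(K^{n+1})\to c_0(\omega_1)$, while $S((a_\alpha)_\alpha):=\sum_\alpha a_\alpha f_\alpha$ is an isometric embedding of $c_0(\omega_1)$, and $T\circ S=\mathrm{id}$; hence $S\circ T$ is a bounded projection of $C(K^{n+1})$ onto a copy of $c_0(\omega_1)$, and moreover $T$ itself is an operator onto $c_0(\omega_1)$, i.e. with non-separable range, so $C(K^{n+1})$ is not half-pcc by Theorem~\ref{introequiv}. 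This is the Cembranos--Freniche--Galego--Hagler scheme; what is new, and is the main difficulty, is to manufacture the $\mu_\alpha$ from the $n$-to-one map $\phi$ so that $T$ lands in $c_0(\omega_1)$ rather than merely in $\ell_\infty(\omega_1)$.

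For the construction I would first record that, for a successor
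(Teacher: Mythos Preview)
Your overall scheme is exactly the one the paper uses: produce an isometric copy of $c_0(\omega_1)$ spanned by characteristic functions of isolated points of $K^{n+1}$, together with a biorthogonal family of measures $\lambda_\gamma$ with $(\lambda_\gamma(h))_\gamma\in c_0(\omega_1)$ for all $h$, and then $P(h)=\sum_\gamma \lambda_\gamma(h)\,\chi_{\{\cdot\}}$ is the required projection. So the framework is right and matches the paper.

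However, your proposal stops precisely at the point you yourself identify as ``the main difficulty'': the construction of the measures. As written it is incomplete, and nothing in what you have submitted indicates how the $n$-to-one hypothesis on $\phi$ will be used. Let me tell you what the paper does, since this is the genuine content of the proof. For each limit $\gamma$ one picks preimages $x_{\gamma+1},\ldots,x_{\gamma+(n+2)}$ of the successors $\gamma+1,\ldots,\gamma+(n+2)$; these are isolated in $K$. The measure is the \emph{alternating} sum
\[
\lambda_\gamma=\sum_{\sigma\in S(n+2)}\mathrm{sgn}(\sigma)\,\delta_{(x_{\gamma+\sigma(1)},\ldots,x_{\gamma+\sigma(n+1)})},
\]
a ``determinantal'' object built from $n+2$ points but evaluated on $K^{n+1}$. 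The point of the alternating signs is that $\lambda_\gamma(V_1\times\cdots\times V_{n+1})=0$ whenever some pair $x_{\gamma+i},x_{\gamma+j}$ is not separated by any of the $V_l$'s: one pairs each $\sigma$ with $\sigma\circ(i\,j)$ and the contributions cancel. The $n$-to-one hypothesis enters to show that for any fixed clopen partition of $K$, all $n+2$ of the points $x_{\gamma+i}$ can be separated for only finitely many $\gamma$: if not, the $n+1$ sequences avoiding the piece containing $\infty$ would accumulate at $n+1$ distinct points of a single fibre $\phi^{-1}[\{\sup\gamma_m\}]$, which has at most $n$ elements. Together these two facts give $\lambda_\gamma(V_1\times\cdots\times V_{n+1})=0$ for all but finitely many $\gamma$, and then Stone--Weierstrass pushes this to the $c_0(\omega_1)$-property for arbitrary $h\in C(K^{n+1})$.

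Without this (or an equivalent) cancellation mechanism your outline does not constitute a proof; the choice of $\mu_\alpha$ is not a routine matter here, and in particular nothing simpler than the alternating construction seems to work, since one really needs $n+2$ points per block to force the pigeonhole against fibres of size $\leq n$.
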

\begin{proof}
%In order to demonstrate Theorem \ref{exists-complemented} we need first to establish some propositions and lemmas. Assume that $K=L\stackrel{\cdot}{\cup}\{\infty\}$ is the one-point compactification of a zero-dimensional locally compact Hausdorff space $L$ and that there exists a continuous bijection $\varphi:L\to [0,\omega_1)$. We may write $L=\{x_{\alpha}:\alpha<\omega_1\}$ where $\varphi(x_{\alpha})=\alpha$ for all $\alpha<\omega_1$.  
Put $L=K\setminus\{\infty\}$. For each $\gamma\in \omega_1$ pick any $x_\gamma\in \phi^{-1}[\{\gamma\}]$.
We will consider the sets of points
$x_{\gamma+1},  \ldots, x_{\gamma+(n+2)}$ for $\gamma\in \mathcal{L}(\omega_1)$. 
All these points are isolated in $L$ and hence in $K$ because successors are isolated in $[0,\omega_1)$
and $\phi$ is continuous.
Note that given  pairwise disjoint clopen partition    of
$K$ into sets $U_1,  \ldots, U_k$ for some $k\in \N$ the set of  all 
$\gamma\in \mathcal{L}(\omega_1)$ such that the sets $U_1, \ldots, U_k$ separate
all the points $x_{\gamma+1}, \ldots, x_{\gamma+(n+2)}$ is at most finite.
Indeed, at most one of the clopen sets, say $U_j$ for some $1\leq j\leq k$
contains $\infty$. So, any accumulation point
of an infinite sequence $x_{\gamma_m+i}$ from outside of $U_j$ and $1\leq i\leq n+2$ for
an increasing sequence $(\gamma_m)_{m\in \N}$  must be in 
$\phi^{-1}[\{\sup_{m\in \N} \gamma_m\}]$. This set has at most $n$ elements but there
will be $n+1$ such distinct accumulation points, if $U_1, \ldots, U_k$ separate
all the points $x_{\gamma_m+1}, \ldots, x_{\gamma_m+(n+2)}$ for infinitely many $m\in \N$, a contradiction.

Now we define measures on $K^{n+1}$ which will serve for defining a projection from $C(K^{n+1})$
onto a copy of $c_0(\omega_1)$. Let $S(k)$ denote the set of all permutations
of $\{1,  \ldots, k\}$ and let $\mathrm{sgn}(\sigma)$ denote the sign of a permutation $\sigma\in S(k)$ for some $k\in \N$.
Let $\lambda_{\gamma}$ be  defined by 
$$\lambda_{\gamma}=\sum_{\sigma\in S(n+2)}\mathrm{sgn}(\sigma)\cdot
 \delta_{\{(x_{\gamma+\sigma(1)}, \ldots, x_{\gamma+\sigma(n+1)})\}}.$$ 
Note that $\sigma$ above is a permutation of all $n+2$ points but the coordinates of the point 
of $K^{n+1}$ use only the first $n+1$ numbers.

Next note that for any clopen $V_1, \ldots, V_{n+1}$ in $K$
the set $\{\gamma \in \mathcal{L}(\omega_1):\lambda_{\gamma}(V_1\times  \ldots\times V_{n+1})\neq 0\}$ is finite.
To see this, first note that by considering all Boolean components $U_1, \ldots, U_k$
for some $k\in \N$ of the finite field of sets generated by
$V_1,  \ldots, V_{n+1}$ we know that for only finitely many $\gamma$s the sets $U_1,  \ldots, U_k$ and so $V_1,  \ldots, V_{n+1}$ 
separate all the points $\{x_{\gamma+1}, \ldots, x_{\gamma+(n+2)}\}$. 

For those $\gamma$ where none of the
$V_1,  \ldots, V_{n+1}$s  separate say $x_{\gamma+i}$ form $x_{\gamma+j}$ and those
 $\sigma\in S(n+2)$  that $\sigma(i), \sigma(j)<n+2$ we have that
$$(x_{\gamma+\sigma(1)},  \ldots,x_{\gamma+\sigma(i)},  \ldots, x_{\gamma+\sigma(j)},  \ldots, x_{\gamma+\sigma(n+1)})
\in V_1\times  \ldots\times V_{n+1}$$
if and only if 
$$(x_{\gamma+\sigma'(1)},  \ldots,x_{\gamma+\sigma'(i)},  \ldots, x_{\gamma+\sigma'(j)},  \ldots, x_{\gamma+\sigma'(n+1)})
\in V_1\times  \ldots\times V_{n+1}$$
where $\sigma'$ is obtained by composing $\sigma$ with the transposition of $i$ and $j$.
In this case $\mathrm{sgn}(\sigma)=-\mathrm{sgn}(\sigma')$ and so the contributions
of $\delta_{\{(x_{\gamma+\sigma(1)},  \ldots, x_{\gamma+\sigma(n+1)})\}}$ and
$\delta_{\{(x_{\gamma+\sigma'(1)},  \ldots, x_{\gamma+\sigma'(n+1)})\}}$ 
on $V_1\times  \ldots\times V_{n+1}$ cancel each other. 

For those $\gamma$ where none of the
$V_1,  \ldots, V_{n+1}$s  separate say $x_{\gamma+i}$ form $x_{\gamma+j}$ and those
 $\sigma\in S(n+2)$  that $ \sigma(j)=n+2$ we have that
$$(x_{\gamma+\sigma(1)},  \ldots,x_{\gamma+\sigma(i)},  \ldots, x_{\gamma+\sigma(n+1)})
\in V_1\times  \ldots\times V_{n+1}$$
if and only if 
$$(x_{\gamma+\sigma'(1)},  \ldots,x_{\gamma+\sigma'(j)},  \ldots, x_{\gamma+\sigma'(n+1)})
\in V_1\times  \ldots\times V_{n+1}$$
where $\sigma'$ is obtained by composing $\sigma$ with the transposition of $i$ and $j$,
i.e., $\sigma'(i)=n+2$.
In this case $\mathrm{sgn}(\sigma)=-\mathrm{sgn}(\sigma')$ and so the contributions
of $\delta_{\{(x_{\gamma+\sigma(1)},  \ldots, x_{\gamma+\sigma(n+1)})\}}$ and
$\delta_{\{(x_{\gamma+\sigma'(1)},  \ldots, x_{\gamma+\sigma'(n+1)})\}}$ 
on $V_1\times  \ldots\times V_{n+1}$ cancel each other as well. 

Having fixed distinct $1\leq i, j\leq n+2$ such that the sets $V_1\times  \ldots\times V_{n+1}$
do not separate the points $x_{\gamma+i}$ from $x_{\gamma+j}$
for a fixed $\gamma<\omega_1$
 it is clear that the set of all permutations can be partitioned into two sets such that
$\sigma$s above belong to one of them and $\sigma'$s to the other, just by reversing the role
of $i$ and $j$ in the permutation. This way
we obtain that the contribution of any point in the definition of $\lambda_\gamma$
is canceled by the contribution of the corresponding point from the other group and
so $\lambda_\gamma(V_1\times  \ldots\times V_{n+1})=0$ when one pair of points
$\{x_{\gamma+1},  \ldots, x_{\gamma+(n+2)}\}$ is not separated by any of the sets $V_1,  \ldots, V_{n+1}$.
But as previously noted this holds for all but finitely many $\gamma$s.

Now let us note that
for each $f \in C(K^{n+1})$ we have $\left(\lambda_{\gamma}(f)\right)_{\gamma \in \mathcal{L}(\omega_1)}\in c_0(\mathcal{L}(\omega_1)).$
Indeed, consider the operator $T: C(K^{n+1})\rightarrow\ell_\infty(\mathcal{L}(\omega_1))$ 
given by $T(f)=(\lambda_\gamma(f))_{\gamma<\omega_1}$.
It is well defined linear bounded operator  such that $T(f)\in c_0(\mathcal{L}(\omega_1))$
for any characteristic function of clopen subset of $K^{n+1}$.
So we obtain that it is true for any $f \in C(K^{n+1})$   by  applying the Weierstrass-Stone theorem
and the fact that $c_0(\mathcal{L}(\omega_1))$ is a closed subspace of $\ell_\infty(\mathcal{L}(\omega_1))$.

Now we are in position to construct the required projection on a copy of $c_0(\omega_1)$.
Since all points $x_{\gamma+i}$ for $\gamma \in \mathcal{L}(\omega_1)$ and $i\in \N$ are isolated,
we may consider $$Y=\overline{\mathrm{span}\{\chi_{\{(x_{\gamma+1},  \ldots, x_{\gamma+n+1})\}}:
\gamma \in \mathcal{L}(\omega_1)\}}$$ where the closure 
is taken in $C(K^{n+1})$. It is clear that $Y$ is isometric with $c_0(\omega_1)$.
We define 
$$P(f)=\sum_{\gamma \in \mathcal{L}(\omega_1)}\lambda_{\gamma}(f)\cdot 
\chi_{\{(x_{\gamma+1},\ldots, x_{\gamma+n+1})\}},\ f \in C(K^{n+1}).$$
For any $f \in C(K^{n+1})$ we have that $\left(\lambda_{\gamma}(f)\right)_{\gamma \in \mathcal{L}(\omega_1)}\in c_0(\mathcal{L}(\omega_1))$ and it follows that $P$ defines a bounded operator from $C(K^{n+1})$ 
to $C(K^{n+1})$. Moreover $P[C(K^{n+1})]\subseteq Y$.

To see that $P$ is a projection we only need to check that $P\restriction_{_{Y}}=Y$ and for that it is enough to prove that $P(\chi_{\{(x_{\gamma+1},  \ldots, x_{\gamma+n+1})\}})=\chi_{\{(x_{\gamma+1},  \ldots, x_{\gamma+n+1})\}}$ for each $\gamma \in \mathcal{L}(\omega_1)$. 
This is clear since for each $\gamma,\ \theta \in \mathcal{L}(\omega_1)$ the following holds:
\begin{displaymath}
\lambda_{\theta}(\chi_{\{(x_{\gamma+1},\ldots,x_{\gamma+n+1})\}})=\lambda_{\theta}(\{(x_{\gamma+1},  \ldots, x_{\gamma+n+1})\})= \left\{
\begin{array}{ll}
0 & \text{ if }\theta \neq \gamma;\\
1 & \text{ if }\theta = \gamma.
\end{array} \right.
\end{displaymath}
\end{proof}

\begin{corollary} Suppose $K$ is the space obtained under the assumption
$\clubsuit$ in Section 4 of \cite{KosZie}.
$C(K)$ is half-pcc and contains an isomorphic copy of $c_0(\omega_1)$.
$C(K\times K)$ contains a complemented copy of $c_0(\omega_1)$, in particular $C(K)$
is not weakly pcc.
\end{corollary}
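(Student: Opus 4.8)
The plan is to read off the corollary from the results of this section by specialising them to $n=1$, so I would begin by recalling the features of the space $K$ of \cite[Section~4]{KosZie} that are needed: $K$ is a scattered compact Hausdorff space of weight $\omega_1$ which is $2$-diverse in the sense of Definition~\ref{diversespace}, and there are a point $\infty\in K$ and a continuous surjection $\phi\colon K\setminus\{\infty\}\to[0,\omega_1)$ with $|\phi^{-1}[\{\alpha\}]|\le 1$ for every $\alpha<\omega_1$. (A scattered compact Hausdorff space is totally disconnected, so the hypotheses of Proposition~\ref{exists-complemented} and of Theorem~\ref{diverseintro} are met.)

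Granting this, the first two assertions are immediate. For each successor ordinal $\beta<\omega_1$ the set $\phi^{-1}[\{\beta\}]$ is a nonempty clopen subset of $K\setminus\{\infty\}$ with at most one point, hence a single point isolated in $K$; thus $K$ has an uncountable set $I$ of isolated points, and the closed linear span in $C(K)$ of $\{\chi_{\{x\}}:x\in I\}$ is isometric to $c_0(I)\cong c_0(\omega_1)$. That $C(K)$ is half-pcc is the implication (1)$\Rightarrow$(2) of Theorem~\ref{diverseintro} with $n=1$ (proved via Proposition~\ref{diversepcc}), since $K$ is $2$-diverse.

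For the third assertion I would apply Proposition~\ref{exists-complemented} to the map $\phi$ with $n=1$: $K$ is compact and totally disconnected and $|\phi^{-1}[\{\alpha\}]|\le 1$ for all $\alpha$, so $C(K^{1+1})=C(K\times K)$ contains a complemented copy of $c_0(\omega_1)$; in particular, as recorded in the same proposition, $C(K\times K)$ is not half-pcc. To conclude that $C(K)$ is not weakly pcc I would use the diversity characterisation already available: the existence of a complemented copy of $c_0(\omega_1)$ in $C(K^2)$ together with Theorem~\ref{diverseintro} (the contrapositive of the implication (1)$\Rightarrow$(3) with $n=2$) shows that $K$ is not $3$-diverse, whence $C(K)$ is not weakly pcc by Theorem~\ref{weaklypccndiverse}.

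There is no genuine obstacle here; the whole content of the corollary lies in Propositions~\ref{exists-complemented} and~\ref{diversepcc} and in the construction of \cite{KosZie}, and the only point calling for attention is the bookkeeping of exactly which properties of that construction are being quoted. (If \cite{KosZie} only records that $C(K)$ carries no complemented copy of $c_0(\omega_1)$ rather than its being half-pcc, one would in addition have to observe that the $\clubsuit$-construction there actually yields $2$-diversity, which it does.)
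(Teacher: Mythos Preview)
Your proposal is correct and follows essentially the approach implicit in the paper: the corollary is stated without proof, placed immediately after Proposition~\ref{exists-complemented}, and is meant to be read off from that proposition (for the complemented copy in $C(K\times K)$), from Proposition~\ref{diversepcc}/Theorem~\ref{diverseintro} (for half-pcc via $2$-diversity), and from the properties of the \cite{KosZie} construction. Your route from ``complemented $c_0(\omega_1)$ in $C(K^2)$'' to ``$C(K)$ is not weakly pcc'' via the failure of $3$-diversity and Theorem~\ref{weaklypccndiverse} is exactly the intended one; the alternative is to invoke Corollary~\ref{powerfullywpcc} directly, but that amounts to the same thing.
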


\begin{proposition}\label{diversepcc}
If a compact scattered Hausdorff $K$ is $(n+1)$-diverse for some $n\in \N$, then $C(K^n)$ is half-pcc.
\end{proposition}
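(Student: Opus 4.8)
The plan is to invoke Theorem~\ref{introequiv}: it suffices to show that $C(K^n)$ carries no $\omega_1$-Josefson--Nissenzweig sequence, and I argue by contradiction. Assume $(\mu_\xi)_{\xi<\omega_1}$ is such a sequence, so $\|\mu_\xi\|=1$ and $(\mu_\xi(f))_{\xi<\omega_1}\in c_0(\omega_1)$ for every $f\in C(K^n)$. Since $K$ is scattered, compact and Hausdorff it is zero-dimensional, and $K^n$ is then likewise zero-dimensional, compact and scattered; in particular clopen boxes separate the points of $K^n$ and every measure on $K^n$ is purely atomic. Write $\mu_\xi=\sum_{k\in\N}c_k^\xi\delta_{p_k^\xi}$ with the points $p_k^\xi=(p_{k,1}^\xi,\dots,p_{k,n}^\xi)\in K^n$ pairwise distinct, $\sum_k|c_k^\xi|=1$, indexed so that $|c_1^\xi|=\max_k|c_k^\xi|$.

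First I would perform a truncation that is uniform on an uncountable set, using the observation that for every $\varepsilon>0$ there are $N\in\N$ and an uncountable set of indices $\xi$ with $\sum_{k>N}|c_k^\xi|<\varepsilon$: otherwise the set $\{\xi:\sum_{k>N}|c_k^\xi|\ge\varepsilon\}$ is co-countable for every $N$, and the intersection of these countably many co-countable sets produces a single $\xi$ whose tails do not tend to $0$, contradicting $\sum_k|c_k^\xi|=1$. Applying this with $\varepsilon=1/2$ to get some $N$ (so that $|c_1^\xi|>1/(2N)$), applying it again inside the resulting uncountable set with $\varepsilon=1/(4N)$, and finally fixing by the pigeonhole principle the number $\ell$ of retained atoms, I obtain an uncountable $\Xi_0\subseteq\omega_1$, an integer $\ell$, and reals $0<\varepsilon<\delta$ such that for every $\xi\in\Xi_0$ one has $\mu_\xi=\nu_\xi+\rho_\xi$ where $\nu_\xi=\sum_{k=1}^{\ell}c_k^\xi\delta_{p_k^\xi}$, $|c_1^\xi|\ge\delta$ and $\|\rho_\xi\|<\varepsilon$.

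The core of the proof is to keep the heaviest atom $p_1^\xi$ apart from $p_2^\xi,\dots,p_\ell^\xi$ at a cluster point, which is where $(n+1)$-diversity is used. After shrinking $\Xi_0$ to an uncountable set $\Xi$ by the pigeonhole principle, I fix for each $k\in\{2,\dots,\ell\}$ a coordinate $i_k\in\{1,\dots,n\}$ with $p_{1,i_k}^\xi\ne p_{k,i_k}^\xi$ for all $\xi\in\Xi$, and I fix the whole pattern of coincidences among the $n+(\ell-1)$ coordinates $p_{1,1}^\xi,\dots,p_{1,n}^\xi,p_{2,i_2}^\xi,\dots,p_{\ell,i_\ell}^\xi$. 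Now I would partition the index set of this $(n+\ell-1)$-tuple: group the first $n$ slots according to the coincidence pattern of the coordinates of $p_1^\xi$ (at most $n$ classes), and place the slot carrying $p_{k,i_k}^\xi$ into the class of that coordinate $p_{1,j}^\xi$ which it equals, or, if it equals none of them, into one extra ``remainder'' class. This yields a partition into at most $n+1$ classes, and each of the tuples above is diverse with respect to it, since coordinates lying in distinct classes are, by construction, distinct. By $(n+1)$-diversity the $\omega_1$-sequence of these tuples has a diverse cluster point; reading off its coordinates furnishes cluster values $p_1^\ast,\dots,p_\ell^\ast$ of the $p_k^\xi$ (in the coordinates being tracked) such that, for each $k\ge2$, the $i_k$-th coordinate of $p_k^\ast$ differs from the $i_k$-th coordinate of $p_1^\ast$ --- precisely because those two slots landed in different classes. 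Hence $p_1^\ast\ne p_k^\ast$ for $2\le k\le\ell$.

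To finish, using zero-dimensionality I would pick clopen sets $U_i\ni p_{1,i}^\ast$ in $K$, arranging that $U_{i_k}$ misses the $i_k$-th coordinate of $p_k^\ast$ for every $k\ge2$, and set $B=U_1\times\cdots\times U_n$, a clopen box with $p_1^\ast\in B$ and $p_k^\ast\notin B$ for $k\ge2$. A single sufficiently fine clopen neighbourhood of the chosen cluster point then determines an uncountable set of $\xi\in\Xi$ --- uncountable since it is cut out by one neighbourhood of a cluster point of an $\omega_1$-net --- for which $p_1^\xi\in B$ while $p_k^\xi\notin B$ for $2\le k\le\ell$; consequently $\nu_\xi(B)=c_1^\xi$ and $|\mu_\xi(\chi_B)|\ge|c_1^\xi|-\|\rho_\xi\|>\delta-\varepsilon>0$ for uncountably many $\xi$, contradicting $(\mu_\xi(\chi_B))_{\xi<\omega_1}\in c_0(\omega_1)$. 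I expect the third paragraph to be the delicate point: setting up the partition so that it uses at most $n+1$ classes --- which is exactly where the hypothesis is felt, the $n$ accounting for the coordinates of a point of $K^n$ and the extra class absorbing the remaining ``differing'' coordinates --- while simultaneously keeping diversity of the tuples automatic and ensuring that diversity of the cluster point genuinely separates $p_1^\ast$ from the other heavy atoms. The truncation and the passage from a cluster point to an uncountable set of indices are routine.
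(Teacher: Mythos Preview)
Your argument is correct and follows essentially the same route as the paper: isolate the heaviest atom of each (truncated) measure from the remaining atoms by applying $(n{+}1)$-diversity to a tuple built from the coordinates of the heavy atom together with enough ``witnessing'' coordinates, then evaluate on the resulting clopen box. The only cosmetic differences are that the paper argues directly with point-finite families of half-spaces rather than via Theorem~\ref{introequiv}, and it packages the witnessing coordinates as the full finite set $G_\alpha\subseteq K$ supporting the truncated measure (with partition $(\{1\},\dots,\{k\},\{k{+}1,\dots,m\})$) instead of your one-distinguishing-coordinate-per-atom scheme; note also that ``infinitely many $\xi$'' already contradicts membership in $c_0(\omega_1)$, so your appeal to uncountability is harmless but unnecessary.
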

\begin{proof} 
Consider half-spaces $H_\alpha=\{f\in C(K^n): \int fd\mu_\alpha>a_\alpha\}$
for $a_\alpha\in \R$ and some Radon measures $\mu_\alpha$
on $K^n$ and any $\alpha<\omega_1$. We will prove that this
collection cannot be point-finite.  It is enough to prove that its refinement
is not point finite, so we may assume 
 that $a_\alpha$s are all equal to some $a\in\R$ bigger than uncountably many $a_\alpha$s. 
By going to an uncountable subset we may assume that there is an $\varepsilon>0$
and finite sets 
$G_\alpha\subseteq K$ such that there is $y_\alpha\in G_\alpha^n$
with 
$$|\mu_\alpha(\{y_\alpha\})|>2\varepsilon$$
 and
 $$|\mu_\alpha(K^n\setminus G_\alpha^n)|<\varepsilon.$$
Here we used the fact that all Radon measures on scattered compacta are atomic.
Going further to a smaller uncountable subsequence we may assume
that there is a partition $I_1\cup  \ldots\cup I_k=\{1, \ldots,n\}$ for some $k\leq n$
such that $y^\alpha_i=y^\alpha_j$ for all $\alpha<\omega_1$ if and only if $i, j\leq n$ are in the same set of
the partition.

We may assume that all the sets $G_\alpha=\{x_1^\alpha, \ldots, x_m^\alpha\}$
 have the same cardinality $m\in \N$ and
that the first $k$ of them in the above enumeration are all the $k\leq n$ coordinates of the point $y_\alpha$
 for all $\alpha<\omega_1$ so that $x_i^\alpha=y^\alpha_j$ if and only if $j\in I_i$
for $i\leq k$ and $j\leq n$.
Form points $(x_1^\alpha, \ldots, x_m^\alpha)\in K^m$.
They are $(\{1\}, ..., \{k\}, \{k+1, ..., m\})$-diverse  so let $(x_1, \ldots, x_m)$
be an $(\{1\}, ..., \{k\}, \{k+1, ..., m\})$-diverse cluster point. It follows that there are    pairwise  disjoint clopen neighbourhoods
$U_1, \ldots, U_k$ of $x_1, \ldots, x_k$ respectively such that 
$$\{x_{k+1},  \ldots, x_m\}\cap (U_1\cup  \ldots\cup U_k)=\emptyset.$$
Now let $V_j=U_i$ if and only if $j\in I_i$ for $j\leq n$ and $i\leq k$. Consider $V_1\times  \ldots\times V_n$.
We will prove now that for infinitely many $\alpha$s we have
$G_\alpha^n\cap V_1\times \ldots\times V_n=\{y_\alpha\}$. Indeed 
$W=U_1\times  \ldots\times U_k\times [K\setminus(U_1\cup \ldots \cup U_k)]^{m-k}$ is a clopen 
neighbourhood of the cluster point $(x_1, \ldots, x_m)$, so the
points $(x_1^\alpha,  \ldots, x_m^\alpha)$ are there for infinitely many $\alpha$s. 
 For the same $\alpha$s we must have
$y_\alpha\in  V_1\times \ldots\times V_n$.
Now if $y=(y_1, \ldots, y_n)\in G_\alpha^n$ is  in $V_1\times \ldots\times V_n$
note that none of the coordinates of $y$ may be among $\{x^\alpha_{k+1}, \ldots, x_m^\alpha\}$
because these must be in $K\setminus(U_1\cup  \ldots\cup U_k)=K\setminus(V_1\cup \ldots\cup V_n)$.
Only one of the remaining possible values $\{x_1^\alpha, \ldots, x^\alpha_k\}$ of the coordinates  of $y$
 may belong to $V_j$ for $j\leq n$, namely $x_i^\alpha$ such that $j\in I_i$, because 
 $V_j$s are
pairwise disjoint, so we conclude that  $y=y_\alpha$.
Hence  for infinitely many $\alpha<\omega_1$ we have
$$|\int \chi_{V_1\times \ldots\times V_n}d\mu_\alpha|\geq |\mu_\alpha(\{y_\alpha\})|-
|\int_{K^n\setminus G_\alpha^n}\chi_{V_1\times \ldots\times V_n}d\mu_\alpha|> \varepsilon.$$
Now considering $f=\pm({{a}\over {\varepsilon}})\chi_{V_1\times  \ldots\times V_n}$ we obtain
$\int fd\mu_\alpha>a$ for infinitely any $\alpha$s which shows that $H_\alpha$s do not form 
a point-finite family and completes the proof of the proposition.
\end{proof}

%%%%%%%%%%%%%%%%%%%%%%%%%%%%%%%%%%%%%
%%%%%%%%%%%%%%%%%%%%%%%%%%%%%%%%%%%%%
%%%%%%%%%%%%%%%%%%%%%%%%%%%%%%%%%%%%%

\section{A compact space from $\clubsuit$}
\label{sec:compact}

Our main result of this section is as follows:

\begin{proposition}[$\clubsuit$]\label{compactclub} 
For each $n\in \mathbb{N}$ there is a $(n+1)$-diverse non-separable Hausdorff compact scattered
topology $\tau$ on  $[0,\omega_1]$ of height $\omega+1$ and weight $\omega_1$
where sets $[0,\alpha+(n-1)]\cup\{\omega_1\}$ are closed for all $\alpha<\omega_1$. Moreover, there is a finite-to-one function $\phi:[0,\omega_1)\rightarrow [0,\omega_1)$ 
which is $\tau$ to the order topology continuous such that $|\phi^{-1}[\{\alpha\}]|\leq n$ for all $\alpha<\omega_1$.
\end{proposition}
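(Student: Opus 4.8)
The plan is to build $\tau$ by a transfinite recursion of length $\omega_1$ that is \emph{steered but not fully determined} by a $\clubsuit$-sequence $\langle A_\delta:\delta\in\mathcal L(\omega_1)\rangle$ (each $A_\delta\subseteq\delta$ cofinal of order type $\omega$, and for every uncountable $X\subseteq\omega_1$ the set $\{\delta:A_\delta\subseteq X\}$ is stationary). Before starting I would fix once and for all a partition of $[0,\omega_1)$ into consecutive finite blocks $B_\gamma$ $(\gamma<\omega_1)$, each of size at most $n$, with every initial union $\bigcup_{\gamma<\beta}B_\gamma$ an ordinal, and set $\phi\restriction B_\gamma\equiv\gamma$; this is already a finite-to-one, $(\le n)$-to-one surjection onto $[0,\omega_1)$. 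The recursion will only ever produce topologies in which every initial union $\bigcup_{\gamma<\beta}B_\gamma$ and every final union $\bigcup_{\gamma\ge\beta}B_\gamma$ of blocks is open; since blocks have width at most $n$, this both makes $\phi$ continuous into the order topology and forces every set $(\alpha+(n-1),\omega_1)$ to be $\tau$-open, i.e.\ $[0,\alpha+(n-1)]\cup\{\omega_1\}$ to be $\tau$-closed. Finally $\omega_1$ is adjoined as the point at infinity: $\tau\restriction[0,\omega_1)$ is to be a locally compact, non-compact, scattered, Hausdorff topology of height $\omega$ all of whose finite derived sets are non-compact, and $\tau$ on $[0,\omega_1]$ is its one-point compactification, so $\omega_1$ gets Cantor–Bendixson rank exactly $\omega$ and $K$ has height $\omega+1$.

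At a stage $\delta\in\mathcal L(\omega_1)$ the neighbourhood bases of all ordinals below $\delta$ are in place, and I would introduce the point(s) of the block(s) handled at $\delta$ as one-point-compactification-style limits of a cofinal $\omega$-sequence together with bounded already-built debris. Several invariants must be kept. For the height: at a reserved cofinal set of stages I would ignore $A_\delta$ and deliberately let the new point be a limit of cofinally many points of maximal current rank, so the heights $h_\delta$ constructed so far grow to $\omega$; since every point of $[0,\omega_1)$ is introduced at some countable stage with finite rank $\le h_\delta$, no point of $[0,\omega_1)$ ever reaches rank $\omega$, whence the overall height is exactly $\omega+1$. For compactness and coherence: neighbourhoods are chosen so that every proper initial ordinal segment, together with the finitely many limit points lying above it, is compact, metrizable and scattered, and so that later stages extend earlier ones; in particular the closure of any countable (hence bounded) set lies inside some $[0,\beta+(n-1)]\cup\{\omega_1\}$, which yields non-separability, while the space has weight $\omega_1$ (it has $\omega_1$ points and is first countable away from $\omega_1$, and, being a non-separable compactum, is non-metrizable).

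The essential task at the non-reserved stages is to secure $(n+1)$-diversity. By the reduction in the proof of Theorem \ref{weaklypccndiverse} it is enough to make $K^k\setminus\Delta_k$ be $\omega_1$-compact for every $k\le n+1$, the case $k=n+1$ being the binding one. Fix a bijective pairing function $p:[0,\omega_1)^{\,n+1}\to[0,\omega_1)$ with $\{\delta<\omega_1:\delta\text{ closed under }p\}$ containing a club. Given, at the verification stage, an $\omega_1$-sequence of pairwise distinct $(n+1)$-tuples, I would first thin it to an uncountable index set on which each coordinate is either constant or strictly increasing towards infinity in the sense of $\phi$; then the set $X$ of $p$-codes of the surviving tuples is uncountable, so by $\clubsuit$ there is a $\delta$, closed under $p$, with $A_\delta\subseteq X$, and for such $\delta$ the set $A_\delta$ decodes via $p$ a cofinal $\omega$-subsequence of tuples all of whose entries already lie below $\delta$. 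During the recursion I would have retained, at every non-reserved stage, the freedom to react to exactly this configuration: route the at most $n$ escaping coordinate streams decoded from $A_\delta$ into the (up to $n$) new points created at stage $\delta$, one stream per point. Because the tuples have pairwise distinct entries the streams are disjoint and the new points can be kept distinct, so the point of $K^{n+1}$ formed from those new points, the fixed values of the constant coordinates, and $\omega_1$ in the slot of a possible $(n+1)$-st escaping stream, is a cluster point of the sequence with pairwise distinct coordinates. Using only $k$ of the new points disposes of every $k\le n+1$ in the same way.

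The main obstacle is making this diagonalisation coexist with everything else. A $\clubsuit$-sequence offers only one countable guess per limit ordinal and, unlike $\diamondsuit$, never hands over an initial segment of a pre-guessed object, so all the diversity demands — for every $k\le n+1$ and every ``normal form'' of a potential counterexample sequence — must be folded into the single sequence $\langle A_\delta\rangle$, and the recursion has to be designed with enough slack at each non-reserved stage to respond to whatever configuration $A_\delta$ happens to decode, all without ever breaking Hausdorffness, coherence, compactness of the initial segments, or the finiteness of the ranks below $\omega_1$. The genuinely technical point is the normalisation step: thinning an arbitrary $\omega_1$-sequence of tuples into a shape that is literally visible inside $\delta$ once $\delta$ is closed under $p$ and the coordinate and block maps, so that what the construction can see at stage $\delta$ already suffices to produce the required cluster point.
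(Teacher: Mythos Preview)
Your overall architecture --- transfinite recursion on $[0,\omega_1)$ steered by a $\clubsuit$-type sequence, blocks of width $\le n$ collapsed by $\phi$, one-point compactification by $\omega_1$ --- matches the paper's. The genuine gap is in the diversity argument.

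You claim that ``by the reduction in the proof of Theorem~\ref{weaklypccndiverse} it is enough to make $K^k\setminus\Delta_k$ be $\omega_1$-compact for every $k\le n+1$''. That theorem, however, is a \emph{for all $n$} equivalence; its proof passes from an $(F_1,\dots,F_k)$-diverse sequence in $K^m$ to points of $K^l\setminus\Delta_l$ where $l$ is the number of \emph{distinct coordinate values} of the tuples, and $l$ can be arbitrarily larger than $k$. Nothing in that proof gives a level-by-level reduction, and in fact Proposition~\ref{diversepcc} genuinely uses $(n+1)$-diversity for $m$-tuples with $m$ unrestricted (the sets $G_\alpha$ there have size $m$, not $n+1$). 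So your verification, which only treats ``an $\omega_1$-sequence of pairwise distinct $(n+1)$-tuples'', establishes at most that $K^{n+1}\setminus\Delta_{n+1}$ is $\omega_1$-compact, not that $K$ is $(n+1)$-diverse.

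This is exactly why the paper does \emph{not} use a fixed-arity pairing $p:[0,\omega_1)^{n+1}\to[0,\omega_1)$ but rather passes to the equivalent principle $\clubsuit'$, whose guesses live in $F(\omega_1)$ and therefore carry $(k+1)$-tuples of \emph{finite sets} $(G^1,\dots,G^{k+1})$ of arbitrary sizes, with $k\le n$. At stage $\gamma$ the construction routes every element of the whole set $G^i$ to the single new point $\gamma+(i-1)$, and leaves all of $G^{k+1}$ to drift to $\omega_1$. In the verification one first applies a $\Delta$-system argument, then codes the \emph{sets} $\{x^\alpha_j:j\in F'_i\}$ rather than single representatives; this is what produces an $(F_1,\dots,F_{k+1})$-diverse accumulation point even when $m$ is large. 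Your encoding cannot express this, so at a stage $\delta$ your recursion has no way to react correctly to a sequence whose ``diversity type'' has blocks of size $\ge 2$.

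A minor point: the reserved stages for pushing the height to $\omega$ are unnecessary. In the paper the height being exactly $\omega+1$ falls out of $2$-diversity (Claim~3 there): if the height were finite, the top Cantor--Bendixson level would furnish an $\omega_1$-sequence of $(\{1\},\{2\})$-diverse pairs (a top-level point and $\omega_1$) whose only accumulation point is $(\omega_1,\omega_1)$, contradicting diversity.
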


The Ostaszewski's principle $\clubsuit$
(\cite{Osta}) is stated as follows:

\begin{definition}\label{ostaszewski} $\clubsuit$ is the following sentence: There is a sequence $({S}_{\alpha})_{\alpha \in \mathcal{L}(\omega_1)}$ such that for each $\alpha \in \mathcal{L}(\omega_1)$:
\begin{enumerate}
\item[(1)] ${S}_{\alpha}\subseteq \alpha$;
\item[(2)] ${S}_{\alpha}$ converges to $\alpha$ in the order topology;
\item[(3)] for every uncountable $X \subseteq \omega_1$ there is $\alpha \in \mathcal{L}(\omega_1)$ such that $ S_{\alpha}\subseteq X$.
\end{enumerate}
\end{definition}

In order to establish Theorem \ref{compactclub}, we need to enrich our terminology. Following the notation of \cite{KosZie}, for an ordinal $\alpha\leq \omega_1$ we put $F_0(\alpha)=\alpha=\{\beta:\beta<\alpha\}$ and for $n>0$ we let $F_{n+1}(\alpha)$ be the set of all finite sequences of elements of $F_n(\alpha)$. Define $F(\alpha)=\bigcup_{n\in \mathbb{N}}F_n(\alpha)$.  For $A \in F(\alpha)$ such that $A \in F_n(\alpha)$, by induction on $n \in \mathbb{N}$ we define the support of $A$, denoted $\mathrm{supp}(A)$, as the union off all sets $\mathrm{supp}(B)$ where $B$ is a term of the sequence $A$ with $\mathrm{supp}(A)=\{A\}$ for $A \in F_0(\alpha)$. If $A,\ B \in F(\alpha), \alpha < \omega_1$, then we say that $A < B$ if and only if $\beta <\gamma$ for every $\beta \in  \mathrm{supp}(A)$ and $\gamma \in  \mathrm{supp}(B)$. 

For a collection $\mathcal{W}$ of subsets of $[0,\omega_1]$, we say that is consecutive if and only if $A < B$ or $B < A$ whenever $A$ and $B$ are two distinct elements of $\mathcal{W}$. 

\begin{definition}
\label{convergence}
Given a collection $\mathcal{W}$ of subsets of $[0,\omega_1]$, we say that it converges to $\gamma \in \mathcal{L}(\omega_1)$ if and only if is consecutive and for every $\beta<\gamma$ the set $\left\{A \in \mathcal{W}: A\subsetneq [\beta+1,\gamma)\ \text{ is finite}\right\}$.
\end{definition}

For our purpose, we need the following version, which is in fact equivalent (\cite[Lemma 4.4]{KosZie}), of Ostaszewski's $\clubsuit$:

\begin{definition} $\clubsuit'$ is the following sentence: There is a sequence $(\mathcal{S}'_{\alpha})_{\alpha \in \mathcal{L}(\omega_1)}$ such that for each $\alpha \in \mathcal{L}(\omega_1)$:
\begin{enumerate}
\item[(1)] $\mathcal{S}'_{\alpha}\subseteq F(\alpha);$
\item[(2)] $\mathcal{S}'_{\alpha}$ converges to $\alpha$ in the sense of Definition \ref{convergence};
\item[(3)] for every uncountable consecutive $\mathcal{W} \subseteq F(\omega_1)$ there is $\alpha \in \mathcal{L}(\omega_1)$ such that $\mathcal{S}'_{\alpha}\subseteq \mathcal{W}$.
\end{enumerate}
\end{definition}

\begin{proof}[Proof of Theorem \ref{compactclub}]
Given $n \in \mathbb{N}$, we fix a sequence $(\mathcal{S}'_{\gamma})_{\gamma \in \mathcal{L}(\omega_1)}$ satisfying the conditions of $\clubsuit'$. This sequence hereafter will be denominated $\clubsuit'$-sequence. For each $\gamma \in \mathcal{L}(\omega_1)$, since $\mathcal{S}'_{\gamma}$ converges to $\gamma$ in the sense of (\ref{convergence}), without loosing generality we may assume that $\mathcal{S}'_{\gamma}=\{s_r(\gamma):r<\omega\}$ satisfies $\max\{\mathrm{supp}(s_r(\gamma))\}+(n-1)<\min\{\mathrm{supp}(s_t(\gamma))\}$ whenever $r<t<\omega$. 

The set of points of our space will be the set $[0,\omega_1)$ where for each $\gamma<\omega_1$ 
we will construct a countable neighborhood basis $\mathcal{B}_{\gamma}$. We perform this construction by transfinite induction. We start fixing on $[0,0]$ the topology $\tau_{0}$ generated by $\mathcal{B}_{0}=\{\{0\}\}$. Given $\gamma<\omega_1$, we assume that we have obtained for each $\beta<\gamma$ a topology $\tau_{\beta}$ on $[0,\beta]$ satisfying:
\begin{itemize}
\item[(1)] $[0,\beta]$ is a scattered locally compact Hausdorff space of height not bigger that $\omega$;
\item[(2)] each point $\delta\in [0,\beta]$ admits a countable neighborhood basis $\mathcal{B}_{\delta}$ consisting only of compact clopen sets and such that $\delta=\max\{V\}$ for each $V\in\mathcal{B}_{\delta}$;
\item[(3)] if $\alpha<\beta$, then $\tau_{\alpha}\subseteq \tau_{\beta}$ and $\{U\cap [0,\alpha]: U\in \tau_{\beta}\}=\tau_{\alpha}$;
\item[(4)] whenever $\alpha+(n-1)<\beta$, $[0,\alpha+(n-1)]$ is closed in $[0,\beta]$.
\end{itemize}
Then we consider   the topology $\tau^*_{\gamma}$ on $[0,\gamma)$ whose basis is $\bigcup_{\beta<\gamma}\tau_{\beta}.$

By condition (1)-(3) the space $[0,\gamma)$ endowed with the topology $\tau^*_{\gamma}$  is Hausdorff, locally compact scattered with height not bigger than $\omega$. We will find an appropriate countable local basis for $\gamma$, $\mathcal{B}_{\gamma}$, and then define on $[0,\gamma]$ the topology $\tau_{\gamma}$ generated by $\tau^*_{\gamma}\cup \mathcal{B}_{\gamma}$. 

If $\gamma$ is a successor ordinal, let $\gamma'$ be greatest limit ordinal smaller than $\gamma$. If $\gamma'+(n-1)<\gamma$ the we fix $\mathcal{B}_{\gamma}=\{\{\gamma\}\}$. If $\gamma'<\gamma\leq\gamma'+(n-1)$ then the collection $\mathcal{B}_{\gamma}$ will be determined by $\gamma'$ as we will see below. We then may assume that $\gamma$ is a limit ordinal and consider the set $\mathcal{S}'_{\gamma}=\{s_r(\gamma):r<\omega\}$ from our fixed $\clubsuit'$-sequence. If $\mathcal{S}'_{\gamma}$ cannot be represented as a collection consisting only of $(k+1)$-tuples, $k\leq n$, from $[\omega_1]^{n_1}\times\ldots\times [\omega_1]^{n_{k+1}}$ for some $\{n_1,\ldots,n_{k+1}\}\subseteq \mathbb{N}$, then we fix $\mathcal{B}_{\gamma+(i-1)}=\{\{\gamma+(i-1)\}\}$ for all $1\leq i \leq n$. Otherwise we write $s_r(\gamma)=\{G^1_r(\gamma),\ldots,G^{k+1}_r(\gamma)\}$ ($G^i_r(\gamma)\in [\omega_1]^{n_i}$, $1\leq i \leq k+1$) or, by abuse of notation, simply $s_r(\gamma)=\{G^1_r,\ldots,G^{k+1}_r\}$.
If for some $r<\omega$, the sets $G^1_r,\ldots,G^{k+1}_r$ are not pairwise disjoint or the height of the points of $G^1_r\cup \ldots\cup G^{k+1}_r$ is not uniformly bounded in $\tau^*_{\gamma}$ by some $p<\omega$, then we again fix $\mathcal{B}_{\gamma+(i-1)}=\{\{\gamma+(i-1)\}\}$ for all $1\leq i \leq n$. 

Otherwise we write $G^1_r\cup\ldots\cup G^{k+1}_r=\{x^1_r,\ldots,x^m_r\}$ and fix a partition $F_1,\ldots,F_{k+1}$ of $\{1,\ldots, m\}$, $|F_i|=n_i$  such that $G^i_r=\{x^s_r: s \in F_i\}$ for all $1\leq i \leq k+1$.

Now we use our our inductive hypotheses. By (4), for each $1\leq j \leq m$ and $r \geq 1$ the set $[\max\{\mathrm{supp}(s_{r-1}(\gamma))\}+n,x^j_{r}]$ is open in $\tau_{x^j_r}$ and then is also open in $\tau^*_{\gamma}$. Moreover by our construction these sets are also non-empty. By applying (1), (2) and (3), for each $1\leq i \leq k$ and for each $j \in F_i$, we may fix a collection of $\tau^*_{\gamma}$-clopen compact sets $\{W^j_r(\gamma):r\in \mathbb{N}\}$ so that:  
\begin{itemize}
\item  $x^j_{r}=\max\{W_r^j(\gamma)\}$;
\item  the heights in $\tau^*_{\gamma}$ of all points of $W_r^j(\gamma)$ are not bigger than $p$;
\item  $W^j_r(\gamma)\subseteq [\max(\mathrm{supp}(s_{r-1}(\gamma))+n,x^j_{r}]$;
\item  $(W^j_r(\gamma)\setminus \{x^j_{r}\})\cap (G^1_r\cup \ldots \cup G^{k+1}_r)=\emptyset$; 
\item  $W^j_r(\gamma)\cap W^l_r(\gamma)=\emptyset$ whenever $j\neq l$. 
\end{itemize}
For each $1\leq i \leq k$, and for each $r \in \mathbb{N}$ define: 
\begin{align*}
V_{r}(\gamma+(i-1))=\{\gamma+(i-1)\}\cup\left(\bigcup_{t\geq r} \bigcup_{l\in F_i} W^l_{t}(\gamma)\right).
\end{align*}

By construction it follows that for every $r,s\in \mathbb{N}$:
\begin{itemize}
\item $V_r(\gamma+(i-1))\cap V_s(\gamma+(j-1))=\emptyset$ whenever $i \neq j$; 
\item $V_r(\gamma+(i-1))\cap (G^j_s\cup G^{k+1}_s)=\emptyset$ whenever $i\neq j$. 
\end{itemize}
Moreover, if $\alpha+(n-1)<\gamma$, then for any $1\leq i \leq k$ there is $r\in \mathbb{N}$ such that: 
\begin{itemize}
\item $V_r(\gamma+(i-1))\subseteq [\alpha+n,\gamma+(i-1)]$. 
\end{itemize}

For each $1\leq i \leq k$ we fix $\mathcal{B}_{\gamma+(i-1)}=\{V_r(\gamma+(i-1)):r\in \mathbb{N}\}$ and for each $k+1\leq i \leq n$ we fix $\mathcal{B}_{\gamma+(i-1)}=\{\{\gamma+(i-1)\}\}$. It is standard to check that $([0,\gamma+(i-1)],\tau_{\gamma+(i-1)})$ satisfies conditions (1)-(3) for each $1\leq i \leq n$. Given $1\leq i \leq n$ we fix $\beta=\gamma+(i-1)$ and we show that $([0,\beta],\tau_{\beta})$ also satisfies (4). Assume $\alpha+(n-1)<\beta$ and by contradiction that there is $\delta \in \overline{[0,\alpha+(n-1)]}^{\tau_{\beta}}$ such that $\alpha+(n-1)<\delta$. We then consider $\mathcal{B}_{\delta}=\{V_r(\delta):r\in \mathbb{N}\}$. Since $\delta$ is not isolated in $([0,\beta],\tau_{\beta})$ we may assume that $\alpha+n<\delta$. Recalling the construction of our space, there must exist $r_0$ such that $\delta \in V_{r_0}(\delta)\subseteq [\alpha+n,\delta]$. This is a contradiction and we deduce that $[0,\alpha+(n-1)]$ is closed in $([0,\beta],\tau_{\beta})$. 

Finally, we consider on $[0,\omega_1)$ the locally compact Hausdorff topology generated by the basis $\bigcup_{\gamma<\omega_1}\tau_{\gamma}$. This space will be denoted by $L_{n}$ and its one-point compactification will be denoted by $K_{n}=L_{n} \stackrel{.}{\cup} \{\omega_1\}$. This concludes the construction of the compact space.

\begin{claim}[1]
There is a finite-to-one continuous function $\phi:L_{n}\rightarrow [0,\omega_1)$ such that $|\phi^{-1}[\{\alpha\}]|\leq n$ for all $\alpha<\omega_1$.
\end{claim}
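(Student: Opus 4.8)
The plan is to define $\phi$ so that the $n$ points $\gamma, \gamma+1, \dots, \gamma+(n-1)$ above a limit ordinal $\gamma$, together with $\gamma$ itself when it is a successor gap, all collapse onto a single ordinal, and to verify continuity using the closedness condition (4) in the construction. More precisely, I would first observe that $[0,\omega_1)$ is partitioned into the blocks $B_\gamma = [\gamma, \gamma+(n-1)] = \{\gamma, \gamma+1, \dots, \gamma+(n-1)\}$ as $\gamma$ ranges over $\mathcal{L}(\omega_1) \cup \{0\}$ (one checks this is exactly a partition of $[0,\omega_1)$, since every ordinal $\delta < \omega_1$ can be written uniquely as $\gamma + i$ with $\gamma$ a limit or $0$ and $0 \le i < n$ — here one uses that there are no limit ordinals strictly between $\gamma$ and $\gamma + \omega$, and $n$ is finite). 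Fix a bijection (or just an order isomorphism) $e$ from the set of these blocks onto $[0,\omega_1)$, say $e$ sending $B_\gamma$ to the ordinal $\delta$ such that $B_\gamma$ is the $\delta$-th block in the natural order, and define $\phi(\gamma + i) = e(B_\gamma)$ for $0 \le i < n$. By construction $\phi$ is onto $[0,\omega_1)$ and $|\phi^{-1}[\{\alpha\}]| = n$ (or possibly fewer if the final block is truncated near $\omega_1$, but in any case $\le n$) for every $\alpha$, so it is finite-to-one with the required fibre bound.

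The substantive part is continuity of $\phi$ from $\tau_{L_n}$ (the topology generated by $\bigcup_{\gamma<\omega_1}\tau_\gamma$) to the order topology on $[0,\omega_1)$. Since the order topology on $[0,\omega_1)$ is generated by the sets $[0,\beta)$ and $(\beta,\omega_1)$, and since $\phi$ is monotone in the sense that it respects the block ordering, it suffices to check that for each $\alpha < \omega_1$ the preimage $\phi^{-1}[[0,\phi(\alpha)+1)]$ — i.e. the union of all blocks $B_\gamma$ up to and including the block containing $\alpha$ — is clopen in $L_n$, and symmetrically that tails are open. The union of the first several blocks is an initial segment of $[0,\omega_1)$ of the form $[0,\gamma+(n-1)]$ for some limit $\gamma$ (the largest limit ordinal in the chosen finite union of blocks), and condition (4) of the construction says precisely that such sets $[0,\alpha+(n-1)]$ are closed in every $[0,\beta]$, hence closed in $L_n$; they are also open, being initial segments in a space where $\gamma+(n-1)$ is — after finitely many successor steps — an isolated point or has a local base contained in that initial segment. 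The complementary tails are then open as well, which gives continuity.

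The main obstacle I expect is handling the boundary cases of the construction cleanly: near a given limit $\gamma$ the points $\gamma, \gamma+1, \dots, \gamma+(n-1)$ may each be isolated (when $\mathcal{S}'_\gamma$ does not have the special form, so $\mathcal{B}_{\gamma+(i-1)} = \{\{\gamma+(i-1)\}\}$), or some of them ($1 \le i \le k$) may be genuine limit points with the basic neighbourhoods $V_r(\gamma+(i-1))$, while the rest ($k+1 \le i \le n$) are isolated. In either situation one must check that the block $B_\gamma$, together with everything below it, forms a clopen initial segment; the key point is the bulleted property in the construction that for $\alpha+(n-1) < \gamma$ there is $r$ with $V_r(\gamma+(i-1)) \subseteq [\alpha+n, \gamma+(i-1)]$, which localizes each non-isolated point's neighbourhood filter strictly above any earlier block, so that the initial segments behave. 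I would also need to double-check that $\phi$ extends continuously to $K_n$ is \emph{not} required — the claim only asks for $\phi$ on $L_n$ — so no work is needed at $\omega_1$, which simplifies matters; the finite-to-one condition and continuity on $L_n$ are all that must be established, and the rest is the routine verification that the indexing $e$ of blocks really is a well-defined surjection onto $[0,\omega_1)$.
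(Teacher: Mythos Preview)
Your proposed partition is wrong, and this breaks the definition of $\phi$. The blocks $B_\gamma = [\gamma, \gamma+(n-1)]$ for $\gamma \in \mathcal{L}(\omega_1) \cup \{0\}$ do not cover $[0,\omega_1)$: the unique decomposition of a countable ordinal is $\delta = \gamma + i$ with $\gamma$ limit (or $0$) and $i < \omega$, not $i < n$. For example, the ordinal $n$ itself (and $\omega + n$, and indeed every $\gamma + i$ with $i \geq n$) lies in no block, so your $\phi$ is undefined there. Between any two consecutive blocks $B_\gamma$ and $B_{\gamma'}$ sit infinitely many successor ordinals $\gamma + n, \gamma + n + 1, \ldots$ that your map ignores.

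The paper avoids this by defining $\phi$ differently: isolated points of $L_n$ are sent to themselves, and only the non-isolated points $\gamma + i$ (necessarily with $0 \le i \le n-1$, by the construction) are collapsed to $\gamma$. Since every $\gamma + i$ with $i \geq n$ is isolated by design (the successor case of the construction makes $\mathcal{B}_{\gamma+i}=\{\{\gamma+i\}\}$ once $i\ge n$), the map is defined everywhere, and the fibre over any $\alpha$ is contained in $\{\gamma,\gamma+1,\ldots,\gamma+(n-1)\}$ when $\alpha=\gamma$ is a limit and is a singleton otherwise. Continuity is checked sequentially using first-countability and the fact that $[\beta+n,\gamma+i]$ is open in $L_n$. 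Your clopen-initial-segment idea for continuity is sound and could serve as an alternative route once the definition of $\phi$ is repaired along these lines, but as written the argument does not get off the ground.
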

Let $L_{n}^{(1)}$ be the set off all accumulation points of $L_{n}$. We define $\phi:L_{n}\to [0,\omega_1)$ by setting first $\phi(\alpha) = \alpha$ if $\alpha\in L_{n}\setminus L_{n}^{(1)}$. If $\alpha \in L_{n}^{(1)}$, then there is $\gamma \in \mathcal{L}(\omega_1)$ and $0\leq i \leq n-1$ such that $\alpha=\gamma+i$. In this case we define $\phi(\alpha)=\phi(\gamma+i)=\gamma$.

It is clear that $|\phi^{-1}[\{\alpha\}]|\leq n$ for all $\alpha<\omega_1$ and that $\phi[L_{n}]$ is homeomorphic to the interval $[0,\omega_1)$ with the order topology. To see that $\phi$ is continuous, let $\alpha=\gamma+i\in L_{n}^{(1)}$ where $\gamma \in \mathcal{L}(\omega_1)$ and $0\leq i \leq n-1$. Since $L_{n}$ is first countable, there is a sequence $(\xi_r)_{r\in \mathbb{N}}$ converging to $\alpha$. By the construction of the space, there is no loss of generality if we assume that $\xi_r<\alpha$ for all $r \in \mathbb{N}$. Since $\phi(\alpha)=\gamma\in \mathcal{L}(\omega_1)$, for any $\beta<\gamma$ we may fix the clopen set $[\beta,\gamma]$ in $[0,\omega_1)$ endowed with the order topology. By the construction of $L_{n}$,  the set $[\beta+n,\gamma+i]$ is open in $L_{n}$ and since $\xi_r$ converges to $\alpha$, there exist $r_0$ such that $\xi_r\in [\beta+n,\gamma+i]$ whenever $r \geq r_0$. From the definition of the function $\phi$, it follows that $\phi(\xi_r)\in [\beta,\gamma]$ whenever $r \geq r_0$.

\begin{claim}[2]
The compact $K_{n}$ is $(n+1)$-diverse. 
\end{claim}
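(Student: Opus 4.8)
We plan to reduce an arbitrary sequence of $(F_1,\dots,F_k)$-diverse points, $k\le n+1$, to one whose ``tail behaviour'' is literally coded by an element of $F(\omega_1)$ that the $\clubsuit'$-sequence guesses at some $\gamma\in\mathcal L(\omega_1)$, and then to read off a diverse cluster point from the special neighbourhoods $V_r(\gamma),\dots,V_r(\gamma+(k'-1))$ erected at that $\gamma$ in the construction of $\tau$. The hypothesis $k\le n+1$ enters only through the inequality $k'\le n$, which is exactly what makes the $n$ extra non-isolated points $\gamma,\dots,\gamma+(n-1)$ provided by the construction enough to ``catch'' all escaping coordinate-blocks but one.

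So fix $m\in\N$, a partition $F_1,\dots,F_k$ of $\{1,\dots,m\}$ with $k\le n+1$, and a sequence $v_\alpha=(x^\alpha_1,\dots,x^\alpha_m)_{\alpha<\omega_1}$ of $(F_1,\dots,F_k)$-diverse points of $K_n^m$. By standard refinements we pass to an uncountable $X\subseteq\omega_1$ on which the following data are constant: the set $J_\infty=\{j:x^\alpha_j=\omega_1\}$; the relation $j\sim j'\iff x^\alpha_j=x^\alpha_{j'}$ on $\{1,\dots,m\}\setminus J_\infty$; the set $C$ of those $j$ for which $\{x^\alpha_j:\alpha\in X\}$ is bounded in $[0,\omega_1)$, together with the (then unique, by pigeon-hole, as the set is countable) value $x^\alpha_j=c_j$; and, writing $U=\{1,\dots,m\}\setminus(J_\infty\cup C)$, the Cantor--Bendixson height in $K_n$ of each $x^\alpha_j$, $j\in U$, which is finite because $L_n$ is scattered of height $\le\omega$, so that all these heights are bounded by one $p<\omega$. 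A $\Delta$-system argument applied to the finite sets $E_\alpha=\{x^\alpha_j:j\in U\}$ --- whose root must be empty, since a coordinate with a constant value lies in $C$, not in $U$ --- lets us moreover assume that the $E_\alpha$, $\alpha\in X$, are pairwise disjoint and block-increasing, i.e.\ $\max E_\alpha<\min E_{\alpha'}$ with gap exceeding $n$ whenever $\alpha<\alpha'$ in $X$. We record two consequences of the diversity of the $v_\alpha$: (i) $J_\infty$ is contained in a single block $F_{i_0}$, since two coordinates with value $\omega_1$ in different blocks would contradict diversity; and (ii) since every basic neighbourhood of a point $\delta\in L_n$ is contained in $[0,\delta]$, every compact subset of $L_n$ is bounded, so $\omega_1$ is a cluster value in $K_n$ of each unbounded subset of $L_n$.

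Let $t$ be the number of blocks meeting $J_\infty\cup U$; note $t\le k\le n+1$. If $t\le 1$, then in the cluster point obtained along a subsequence on which every (of the finitely many) coordinate in $U$ converges to $\omega_1$ --- possible by (ii) --- all occurrences of $\omega_1$ lie in one block, and the values $c_j$ are separated across distinct blocks by the diversity of the $v_\alpha$; hence that cluster point is $(F_1,\dots,F_k)$-diverse, as required. Assume now $t\ge2$. By (i) at least $t-1$ of these $t$ blocks are disjoint from $J_\infty$, hence made up, apart from coordinates in $C$, of coordinates in $U$ only; declare as the \emph{kept} block the one containing $J_\infty$ if $J_\infty\ne\emptyset$, and any $J_\infty$-free one otherwise, and list the remaining $t-1$ \emph{escaping} blocks as $F_{i_1},\dots,F_{i_{k'}}$, $k'=t-1\le n$. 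For $1\le s\le k'$ put $G^s_\alpha=\{x^\alpha_j:j\in F_{i_s}\cap U\}$, and let $G^{k'+1}_\alpha$ be the corresponding set for the kept block if its $U$-part is non-empty, and otherwise the singleton $\{\eta_\alpha\}$ for an isolated point $\eta_\alpha$ chosen just above $\max E_\alpha$. By our refinements (and the diversity of the $v_\alpha$, which makes the groups pairwise disjoint), $\mathcal W=\{\langle G^1_\alpha,\dots,G^{k'+1}_\alpha\rangle:\alpha\in X\}$ is an uncountable consecutive family of $(k'+1)$-tuples of pairwise disjoint finite sets of height $\le p$, the cardinalities $|G^s_\alpha|$ being constant in $\alpha$, with $k'\le n$. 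Shrinking $X$ so that all supports lie above $\sup\{c_j:j\in C\}$, $\clubsuit'$ yields $\gamma\in\mathcal L(\omega_1)$ with $\mathcal S'_\gamma\subseteq\mathcal W$, say $\mathcal S'_\gamma=\{\langle G^1_{\alpha(r)},\dots,G^{k'+1}_{\alpha(r)}\rangle:r<\omega\}$; since $\mathcal S'_\gamma$ then converges to $\gamma$ and is a collection of $(k'+1)$-tuples of the shape demanded in the construction, at $\gamma$ one is in the productive branch with exactly our labelling. Consequently, for each escaping block $F_{i_s}$ every coordinate in $F_{i_s}\cap U$ converges, along $r$, to $\gamma+(s-1)$ (its values lie in $G^s_{\alpha(r)}$, hence eventually in each $V_\rho(\gamma+(s-1))$), while the kept block's coordinates in $U$ and the fillers $\eta_{\alpha(r)}$ --- being cofinal in $\gamma$ (as $\mathcal S'_\gamma$ converges to $\gamma$) and missed by the finitely many neighbourhoods near $\gamma$ (by the clauses $V_r(\gamma+(i-1))\cap(G^j_s\cup G^{k'+1}_s)=\emptyset$ of the construction), hence having no cluster value in $L_n$ --- converge to $\omega_1$ by compactness of $K_n$. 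Hence $v_{\alpha(r)}\to v$ in $K_n^m$, where $v$ has coordinates $\omega_1$ on $J_\infty$ and on the kept block's $U$-part, $c_j$ on $C$, and $\gamma+(s-1)$ on $F_{i_s}\cap U$. This $v$ is a cluster point of $(v_\alpha)_{\alpha<\omega_1}$, and it is $(F_1,\dots,F_k)$-diverse: the value $\omega_1$ appears only in the kept block; each $\gamma+(s-1)$ appears only in $F_{i_s}$, by the separation clauses just quoted; and the $c_j$, which lie below $\gamma$, are pairwise distinct across distinct blocks by the diversity of the $v_\alpha$.

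The main obstacle is the reduction step together with the bookkeeping at $\gamma$: one has to stabilize all the combinatorial data simultaneously on one uncountable set, turn the unbounded coordinates into a genuinely consecutive, pairwise disjoint, bounded-height array coded inside $F(\omega_1)$ (this is where the $\Delta$-system argument is used, and where the ``filler'' entry is occasionally needed to pad the tuple to length $k'+1$), and then verify that the tuple $\langle G^1_\alpha,\dots,G^{k'+1}_\alpha\rangle$ has exactly the form that triggers the case of the construction in which $\gamma,\gamma+1,\dots,\gamma+(k'-1)$ receive neighbourhoods $V_r(\gamma+(i-1))$ absorbing $G^1,\dots,G^{k'}$ and keeping them mutually separated and separated from $G^{k'+1}$. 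Once this is in place, diversity of the resulting cluster point is immediate from the separation clauses of the construction and the diversity of the $v_\alpha$.
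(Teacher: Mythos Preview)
Your proof is correct and follows essentially the same strategy as the paper's: stabilize the combinatorial data by a $\Delta$-system argument, encode the non-constant coordinate blocks as a consecutive family in $F(\omega_1)$, apply $\clubsuit'$ to find a $\gamma\in\mathcal L(\omega_1)$ at which the productive branch of the construction was taken, and read off the diverse cluster point from the neighbourhoods $V_r(\gamma),\dots,V_r(\gamma+(k'-1))$. You are in fact more careful than the paper in several places---stabilizing the Cantor--Bendixson heights explicitly, treating the case $t\le1$ separately, and introducing a filler entry when the kept block has empty $U$-part so that the tuple has the required length $k'+1$---all of which address edge cases the paper's argument passes over in silence.
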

Let $m\in \N$ and $F_1,\ldots,F_{k+1}$ a partition of $\{1,\ldots,m\}$ such that $k \leq n$. Let $\{(x_1^{\alpha},\ldots,x^{\alpha}_{m})\}_{\alpha<\omega_1}\subseteq K_{n}^{m}$ be a sequence of $(F_1,\ldots,F_{k+1})$-diverse points. We will prove that this sequence admits a $(F_1,\ldots,F_{k+1})$-diverse cluster point.

First of all, by reducing to an uncountable subset we may assume that the collection $\{\{x_1^{\alpha},\ldots,x^{\alpha}_{m}\}:\alpha<\omega_1\}$ constitutes a $\Delta$-system with root $\Delta$. Moreover, we may also assume that there exists $D \subseteq \{1,\ldots,m\}$ such that $\Delta=\{x^{\alpha}_{i}:i\in D\}$ and we will denote the element $x^{\alpha}_{i} \in \Delta$ by $a_{i}$. For each $1\leq j \leq k+1$ define $F'_j=F_j\setminus D$. If $F'_1\cup \ldots \cup F'_{k+1}=\emptyset$ then we are done. Without loss of generality we may assume that $F'_j\neq \emptyset$ for some $1\leq j \leq k+1$. 

By reducing to another uncountable subset if necessary, we may assume that there is at most only one $1\leq j \leq k+1$ such that if $x_i^{\alpha}=\omega_1$ for some $\alpha$, then $i\in F_{j}\cap D$ , moreover, without loss of generality we may suppose that such $j=k+1$ and then define $G^{k+1}_{\alpha}=\{x^{\alpha}_{i}:i\in F'_{k+1}\}$ for every $\alpha<\omega_1$. We also define $G_{\alpha}^{j}=\{x^{\alpha}_{s}:s\in F'_j\}$ for each $1\leq j \leq k$ and $\alpha<\omega_1$. 

By reenumerating terms we may assume that $G^{\alpha}_{1}\cup\ldots\cup G^{\alpha}_{k+1}=\{y_1^{\alpha},\ldots,y_p^{\alpha}\}$ for each $\alpha<\omega_1$, and it follows that  $\{(y_1^{\alpha},\ldots,y^{\alpha}_{p})\}_{\alpha<\omega_1}\subseteq K_n^{p}$ is a sequence of $(F'_1,\ldots,F'_{k+1})$-diverse points. 

We may fix the sequence $\mathcal{S}=\{\{G^{\alpha}_{1},\ldots, G^{\alpha}_{k+1}\}:\alpha<\omega_1\}$ and assume that is consecutive. Moreover, by going to an uncountable subset if necessary, we may assume that $(\Delta\setminus\{\omega_1\})<G^{\alpha}_{1}\cup\ldots\cup G^{\alpha}_{k+1}$ for all $\alpha$.

Recalling our $\clubsuit'$-sequence in the construction of $K$, there is exist $\gamma \in \mathcal{L}(\omega_1)$ such that $\mathcal{S}_{\gamma}'\subseteq \mathcal{S}$ and $\mathcal{S}_{\gamma}'$ can be enumerated in the increasing order as $$\mathcal{S}'_{\gamma}=\{\{G_1^{r}(\gamma),\ldots,G_{k+1}^{r}(\gamma)\}:r<\omega\}.$$  

We define $z=(z_1,\ldots,z_m)$ in the following way: for $1\leq i \leq k$, if $j \in F'_{i}=F_{i}\setminus D$ we set $z_j=\gamma+(i-1)$, if $j \in F'_{k+1}=F_{k+1}\setminus D$ then we put $z_i=\omega_1$ and if $j \in  D$ then we fix $z_j=a_j\in \Delta$. According to the construction of $K_{n}$, if $1\leq i \leq k$ then any sequence $(u_{r})_{r\in \mathbb{N}}$ such that $u_{r}\in G_i^{r}(\gamma)$ for every $r \in \mathbb{N}$ will converge to $\gamma+(i-1)$ and if $i=k+1$ will converge to $\omega_1$. It follows that $z$ is an accumulation point of the original sequence.  

If $z$ is not $(F_1,\ldots,F_{k+1})$-diverse, then there is $x\in \{z_j:j \in F_{i_1}\}\cap \{z_j:j \in F_{i_2}\}$ for some $i_1\neq i_2$. By definition of $z$, there are $j_1 \in F_{i_1}\cap D$ and $j_2 \in F_{i_1}\cap D$ such that $x=a_{j_1}=a_{j_2}\in \Delta$.  Since our original sequence is $(F_1,\ldots,F_{k+1})$-diverse, it follows that $\{a_j:j \in F_{i_1}\cap D\}\cap \{a_j:j \in F_{i_2}\cap D\}=\emptyset$, a contradiction. 

\begin{claim}[3]
The space $K_{n}$ has height $\omega+1$. 
\end{claim}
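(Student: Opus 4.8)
The plan is to compute the Cantor--Bendixson derivatives of $K_n$ and to show that $K_n^{(\omega)}=\{\omega_1\}$ and $K_n^{(\omega+1)}=\emptyset$, which is exactly the statement about the height. First I would record three facts that are readily checked from the inductive clauses (1)--(4) of the construction. (a) Each $[0,\delta]$, $\delta<\omega_1$, is open in $L_n$: by clause (2) every $\varepsilon\le\delta$ has a basic neighbourhood contained in $[0,\varepsilon]\subseteq[0,\delta]$. (b) Each basic neighbourhood $V\in\mathcal{B}_\delta$ is compact and clopen in $L_n$: by clause (2) it is compact and clopen in $\tau_\delta$, which by clause (3) is the subspace topology $[0,\delta]$ inherits from $L_n$, and $[0,\delta]$ is open in $L_n$ by (a). (c) Cantor--Bendixson rank is local: if $C$ is clopen in a space $X$ then $C^{(\eta)}=X^{(\eta)}\cap C$ for every ordinal $\eta$, so a point's rank may be computed inside any clopen neighbourhood of it; the same applies with $\tau^*_\gamma$ in place of $L_n$.

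\emph{Height $\le\omega+1$.} Given $\delta\in L_n$, pick $V\in\mathcal{B}_\delta$. Viewed as a subspace of $([0,\delta],\tau_\delta)$, $V$ has height at most $\omega$ by clause (1), so $V^{(\omega)}=\emptyset$; by (b) and (c), the rank of $\delta$ in $L_n$ equals its rank in $V$ and is therefore finite. Hence $L_n^{(\omega)}=\emptyset$. Since $L_n$ is open in its one-point compactification $K_n$, we get $K_n^{(\omega)}\cap L_n=L_n^{(\omega)}=\emptyset$, i.e.\ $K_n^{(\omega)}\subseteq\{\omega_1\}$, and therefore $K_n^{(\omega+1)}=\emptyset$.

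\emph{Height $\ge\omega+1$.} It suffices to show that for every $\ell<\omega$ the set of points of $L_n$ of rank exactly $\ell$ is cofinal in $\omega_1$. Indeed $L_n^{(\ell)}$ is then cofinal, hence not relatively compact in $L_n$ (every compact subset of $L_n$ is contained in some $[0,\beta]$, since the $[0,\beta]$, $\beta<\omega_1$, form an increasing open cover), so $\omega_1$ is a limit point in $K_n$ of $L_n^{(\ell)}\subseteq K_n^{(\ell)}$; thus $\omega_1\in K_n^{(\ell+1)}$ for every $\ell$, and so $\omega_1\in K_n^{(\omega)}$. I would prove the cofinality statement by induction on $\ell$. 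For $\ell=0$ it is immediate, because $\gamma+j$ is isolated in $L_n$ whenever $j\ge n$. Assume it for $\ell$ and fix $\beta<\omega_1$. Using the inductive hypothesis and the case $\ell=0$, build recursively a strictly increasing sequence of ordinals $\beta<a_0<b_0<a_1<b_1<\cdots$ of length $\omega_1$ with each $a_\xi$ of rank exactly $\ell$ and each $b_\xi$ isolated in $L_n$. Then $\mathcal{W}=\{(\{a_\xi\},\{b_\xi\}):\xi<\omega_1\}$ is an uncountable consecutive subfamily of $F(\omega_1)$, so by property (3) of $\clubsuit'$ there is a limit ordinal $\gamma$, necessarily with $\gamma>\beta$, as the members of $\mathcal{S}'_\gamma\subseteq\mathcal{W}$ have supports inside $(\beta,\omega_1)$ and, by property (1) of $\clubsuit'$, below $\gamma$. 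At stage $\gamma$ of the construction $\mathcal{S}'_\gamma$ falls into the ``good'' case: it is a consecutive collection of $2$-tuples of disjoint singletons, and the heights---in $\tau^*_\gamma$, equivalently, by (b) and (c), in $L_n$---of the ordinals occurring in its supports are uniformly bounded by $\ell$. Hence $\gamma$ receives the neighbourhood basis $\{V_r(\gamma):r\in\mathbb{N}\}$ with $V_r(\gamma)=\{\gamma\}\cup\bigcup_{t\ge r}W^1_t(\gamma)$, where each $W^1_t(\gamma)$ is clopen in $L_n$, has some $a_\xi$ (a point of rank $\ell$) as its maximum, and has all of its points of rank $\le\ell$. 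Therefore every $V_r(\gamma)$ meets $L_n^{(\ell)}\setminus\{\gamma\}$, while $V_r(\gamma)\cap L_n^{(\ell+1)}\subseteq\{\gamma\}$; that is, $\gamma$ has rank exactly $\ell+1$, and $\gamma>\beta$. This closes the induction, and combined with the previous paragraph it gives that $K_n$ has height $\omega+1$.

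I expect the inductive step of the lower bound to be the main obstacle: one has to verify simultaneously that the generic limit point $\gamma$ supplied by $\clubsuit'$ has basic neighbourhoods descending to points of rank $\ell$ (which is why the $a_\xi$ must be chosen of rank exactly $\ell$) and that these neighbourhoods contain no point of rank $\ge\ell+1$ below $\gamma$ (which forces the uniform height bound in the ``good'' case to be $\ell$, and uses that $\tau^*_\gamma$-ranks and $L_n$-ranks agree on the clopen pieces $W^j_r(\gamma)$ and on the relevant basic neighbourhoods). The other ingredients---locality of Cantor--Bendixson rank, openness of $L_n$ and of the sets $[0,\delta]$ inside $K_n$, and boundedness of the compact subsets of $L_n$---are routine consequences of clauses (1)--(4) and of the definition of the one-point compactification.
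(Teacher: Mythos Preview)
Your proof is correct. The upper bound (height $\le\omega+1$) is argued the same way as in the paper: every $\delta\in L_n$ sits inside a compact clopen basic set whose points have uniformly bounded finite rank, so $L_n^{(\omega)}=\emptyset$ and $K_n^{(\omega)}\subseteq\{\omega_1\}$.

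For the lower bound the two arguments diverge. The paper does not reconstruct points of each rank; instead it invokes the already established $2$-diversity (Claim~2): if the height were finite one isolates an uncountable set of points in $L_n$ of maximal (uncountably attained) rank, thins it---using that every $\delta<\omega_1$ has a countable neighbourhood $[0,\delta]$---so that its only accumulation point in $K_n$ is $\omega_1$, and then observes that pairs from this thinned set form an $\omega_1$-sequence of $(\{1\},\{2\})$-diverse points whose only cluster point is $(\omega_1,\omega_1)$, contradicting $2$-diversity. Your approach instead goes back to the $\clubsuit'$-machinery and explicitly manufactures, for every $\ell$, a limit ordinal $\gamma>\beta$ whose $\clubsuit'$-data are pairs $(\{a_\xi\},\{b_\xi\})$ with $a_\xi$ of rank exactly $\ell$; the construction then forces $\gamma$ to have rank exactly $\ell+1$. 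This is a genuinely different route: it is longer and repeats the mechanics of the construction, but it is self-contained (it does not rely on Claim~2) and makes transparent exactly which $\gamma$'s acquire each finite rank. The paper's argument is shorter and more conceptual, trading the explicit construction for the structural consequence of $2$-diversity, though it leaves to the reader the thinning step that reduces to ``the only accumulation point is $\omega_1$''.
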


We observe that the height of $K_{n}$ cannot be bigger $\omega+1$. Indeed, for any $\gamma<\omega_1$, the height of the points of $V_0(\gamma)\setminus \{\gamma\}$ are uniformly bounded by some $p<\omega$, then the height of $\gamma$ cannot be bigger than $p + 1$. 

On the other hand, the height cannot be finite because
then the only accumulation points of sequences from the last Cantor-Bendixson level
would have to have all coordinates equal to $\omega_1$ which contradicts the $2$-diversity.
\end{proof}

%\begin{corollary}[$\clubsuit$]
%For each $n \in \mathbb{N}$ there is a non-metrizable compact $K$ of density $\omega_1$ such that $$c_0(\omega_1)\stackrel{c}{\not\hookrightarrow}C(K^n)\text{ and }c_0(\omega_1)\stackrel{c}{\hookrightarrow}C(K^{n+1}).$$
%\end{corollary}

\begin{proposition}[$\clubsuit$]\label{komega} 
There is a non-separable Hausdorff compact scattered
topology $\tau$ on  $[0,\omega_1]$  which is $n$-diverse for every $n$, has
height $\omega+1$ and weight $\omega_1$
where sets $[0,\alpha+(n-1)]\cup\{\omega_1\}$ are closed for all $\alpha<\omega_1$. 
Moreover, there is a finite-to-one function $\phi:[0,\omega_1)\rightarrow [0,\omega_1)$ 
which is $\tau$ to the order topology continuous.
\end{proposition}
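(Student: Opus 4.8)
The plan is to rerun the construction of $K_n$ from the proof of Proposition \ref{compactclub}, the only change being that the a priori bound $k\le n$ on the number of parts of the partitions coded by the $\clubsuit'$-sequence is dropped. So one works again on the underlying set $[0,\omega_1)$, fixes a $\clubsuit'$-sequence $(\mathcal{S}'_\gamma)_{\gamma\in\mathcal{L}(\omega_1)}$ thinned (as in \cite[Lemma 4.4]{KosZie}) so that within each $\mathcal{S}'_\gamma=\{s_r(\gamma):r<\omega\}$ there are at least two limit ordinals between $\sup\mathrm{supp}(s_{r-1}(\gamma))$ and $\min\mathrm{supp}(s_r(\gamma))$ for all $r\ge 1$, and builds by transfinite recursion locally compact, Hausdorff, scattered topologies $\tau_\beta$ on $[0,\beta]$ of height at most $\omega$ satisfying the analogues of conditions $(1)$--$(3)$ of that proof together with the following form of $(4)$: if $\eta$ is a limit ordinal carrying a block of $k_\eta$ new non-isolated points and $\eta+k_\eta-1<\beta$, then $[0,\eta+k_\eta-1]$ is closed in $[0,\beta]$. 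At a limit ordinal $\gamma$ one examines $\mathcal{S}'_\gamma$: if it can be written as a consecutive family $\{\{G^1_r(\gamma),\dots,G^{k+1}_r(\gamma)\}:r<\omega\}$ of $(k+1)$-tuples of pairwise disjoint finite sets of ordinals, for some finite $k$ \emph{with no upper bound}, whose $\tau^*_\gamma$-heights are uniformly bounded, one adds a block of exactly $k$ new non-isolated points $\gamma,\gamma+1,\dots,\gamma+(k-1)$ with neighbourhood bases $\{V_r(\gamma+(i-1)):r\in\mathbb{N}\}$ built from clopen compact ``$W$-sets'' around the points of $G^i_r(\gamma)$ exactly as in Proposition \ref{compactclub}, so that any transversal of $(G^i_r(\gamma))_r$ converges to $\gamma+(i-1)$ for $1\le i\le k$ while any transversal of $(G^{k+1}_r(\gamma))_r$ has no cluster point in the space; the ordinals in $[\gamma+k,\gamma+\omega)$ are declared isolated. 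Otherwise $\gamma$ is declared isolated. Then $K_\omega$ is the one-point compactification $L_\omega\cup\{\omega_1\}$ of the resulting locally compact space $L_\omega$ on $[0,\omega_1)$.

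The verifications would reproduce Claims $1$--$3$ of Proposition \ref{compactclub}, now with $k$ ranging over all of $\N$. The space $K_\omega$ is compact, Hausdorff and scattered because $L_\omega$ is locally compact, Hausdorff and scattered; it is nonseparable because its $\omega_1$ isolated points form an uncountable open discrete subspace; its weight is $\omega_1$ (it is first countable at the $\omega_1$ points of $L_\omega$, the countable initial segments $[0,\beta]$ are $\sigma$-compact, and the space is nonseparable). Its height is $\omega+1$ by the argument of Claim $3$: it is at most $\omega+1$ since each new block sits at heights uniformly bounded by a natural number, and it cannot be finite since otherwise every cluster point of a sequence of distinct points from the top Cantor--Bendixson level would have to equal $\omega_1$, contradicting $2$-diversity. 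The finite-to-one continuous map $\phi\colon[0,\omega_1)\to[0,\omega_1)$ is defined as in Claim $1$ by $\phi(\gamma+i)=\gamma$ for a limit $\gamma$ and $0\le i<k_\gamma$, and $\phi(\alpha)=\alpha$ otherwise; its fibres are finite (the fibre over a limit $\gamma$ has $k_\gamma$ elements, the remaining fibres have at most one element) and its continuity is checked exactly as in Claim $1$. Here the fibre sizes are unbounded, in contrast with Proposition \ref{compactclub}, consistently with $K_\omega$ being $n$-diverse for every $n$ via Theorem \ref{diverseintro}.

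It would remain to see that $K_\omega$ is $n$-diverse for every $n$, which goes exactly as in Claim $2$ of Proposition \ref{compactclub} with the number $k+1$ of parts now arbitrary. Given $m\in\N$, a partition $F_1,\dots,F_{k+1}$ of $\{1,\dots,m\}$ and a sequence of $(F_1,\dots,F_{k+1})$-diverse points of $K_\omega^m$, pass to an uncountable subsequence forming a $\Delta$-system, isolate its root $\Delta=\{a_i:i\in D\}$, shrink so that the set of coordinates equal to $\omega_1$ is a fixed subset of a single part --- necessarily contained in $F_{k+1}\cap D$ after reindexing, since diversity forbids two parts from sharing the value $\omega_1$ --- put $G^j_\alpha=\{x^\alpha_s:s\in F_j\setminus D\}$, and shrink once more so that $\mathcal{S}=\{\{G^1_\alpha,\dots,G^{k+1}_\alpha\}:\alpha<\omega_1\}$ is a consecutive family of $(k+1)$-tuples of pairwise disjoint finite subsets of $[0,\omega_1)$ of uniformly bounded height lying above $\Delta\setminus\{\omega_1\}$. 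By $\clubsuit'$ there is $\gamma\in\mathcal{L}(\omega_1)$ with $\mathcal{S}'_\gamma\subseteq\mathcal{S}$; since every member of $\mathcal{S}$ has exactly the syntactic shape to which the construction reacts, a block of $k$ new non-isolated points was created at $\gamma$, and hence the point $z$ with $z_j=\gamma+(i-1)$ for $j\in F_i\setminus D$ $(1\le i\le k)$, $z_j=\omega_1$ for $j\in F_{k+1}\setminus D$, and $z_j=a_j$ for $j\in D$, is a cluster point of the original sequence. It is $(F_1,\dots,F_{k+1})$-diverse, because the only possible clash is between two root values in distinct parts, which the diversity of the original sequence excludes. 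By Theorem \ref{weaklypccndiverse} this in particular gives that $C(K_\omega)$ is weakly pcc.

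I expect the main obstacle to be the bookkeeping needed to keep the recursive invariants, above all the analogue of $(4)$, intact once the block sizes $k_\gamma$ are unbounded. The role of $(4)$ is that carving the clopen compact set $W^j_r(\gamma)$ strictly below $x^j_r\in G^i_r(\gamma)$ requires a closed initial segment of $L_\omega$ lying between $\sup\mathrm{supp}(s_{r-1}(\gamma))$ and $\min\mathrm{supp}(s_r(\gamma))$; an initial segment is closed exactly up to a block boundary, and the two limit ordinals inserted between consecutive supports supply one --- the block following the first such limit is finite, hence ends before the second limit and a fortiori before $\min\mathrm{supp}(s_r(\gamma))$, so its terminal segment is the required closed initial segment anchoring $W^j_r(\gamma)$. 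Apart from this, every step is a transcription of the proof of Proposition \ref{compactclub} with the restriction $k\le n$ removed.
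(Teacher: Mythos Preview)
Your proposal is correct and follows precisely the paper's approach --- the paper's own proof is the single sentence ``Do a similar construction as the previous one but allowing the limit points $\gamma$ to split into $n$ points $\{\gamma+i:i<n\}$ without limiting $n\in \N$'', and you have filled in exactly these details, correctly isolating the analogue of invariant~(4) for variable block sizes $k_\gamma$ as the one new bookkeeping point. (One minor caveat: your uniform thinning to ``two limit ordinals between consecutive supports'' cannot be arranged at $\gamma$ of the form $\lambda+\omega$, but at such $\gamma$ all sufficiently large $\alpha<\gamma$ are isolated, so every tail initial segment is already closed and no thinning is needed --- alternatively, restrict the active stages to the club of limits of limit ordinals, which preserves the $\clubsuit'$ catching property.)
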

\begin{proof} Do a similar construction as the previous one but allowing
the limit points $\gamma$ to split into $n$ points $\{\gamma+i:i<n\}$
without limiting $n\in \N$.
\end{proof}

\section{The spaces  of A. Dow, H. Junnila and J. Pelant}

In this section, the symbol $DJP_1$ stands for the compact Hausdorff space
constructed in \cite[Example 2.17, there denoted by $K_3$]{DowJunPel} under the assumption of $\diamondsuit$
and $DJP_2$ stands for a ZFC compact space from  \cite[Example 2.16, there denoted by $K_2$]{DowJunPel}.
It is proved in \cite{DowJunPel} that $C(DJP_1)$ is weakly pcc and admits a finite-to-one continuous
map onto $[0,\omega_1]$ with the order topology.  The proof
of the fact that $C(DJP_1)$ is weakly pcc in \cite{DowJunPel} relies on the fact
that it is pointwise pcc which means that every point-finite family of open sets
in the topology of pointwise convergence is at most countable. In fact, it is
proved in Lemma 2.13 of \cite{DowJunPel} that for $C(K)$ for  a scattered compact space  $K$
to be weakly pcc is the same as to be pointwise pcc.
On the other hand
 Arkhangelskii and Tka\v cuk implicitly proved in \cite{Arh} (see Proposition 2.2. of \cite{DowJunPel})
that
a compact Hausdorff $K$ is pointwise pcc if and only if $K^n\setminus \Delta_n$ is 
$\omega_1$-compact for all $n\in \mathbb{N}$, where $\omega_1$-compact means that
every set of cardinality $\omega_1$ has  an accumulation point and
$\Delta_n=\{(x_1,  \ldots, x_n)\in K^n:  \  x_i=x_j\ \hbox{\rm for some}\  i\not=j\}$.
Our main observation concerning these notions is the following:

\begin{lemma}\label{pointwisesquare} Suppose $K$ is a compact space. If $C(K)$ is pointwise pcc, then
$C(K^2)$ is pointwise pcc as well.
\end{lemma}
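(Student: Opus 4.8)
\textbf{Proof proposal for Lemma \ref{pointwisesquare}.}

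The plan is to work through the characterization recalled just above the lemma: by the Arkhangelskii--Tka\v cuk criterion (Proposition 2.2 of \cite{DowJunPel}), $C(K)$ being pointwise pcc is equivalent to $K^n\setminus\Delta_n$ being $\omega_1$-compact for every $n\in\N$, and similarly $C(K^2)$ being pointwise pcc is equivalent to $(K^2)^n\setminus\Delta_n$ being $\omega_1$-compact for every $n\in\N$. So the whole task reduces to the following combinatorial statement about $K$: \emph{if $K^p\setminus\Delta_p$ is $\omega_1$-compact for all $p$, then $(K\times K)^n\setminus\Delta_n$ is $\omega_1$-compact for all $n$}. Note $(K\times K)^n$ is naturally $K^{2n}$, and a point of it is of the form $((x_1,y_1),\dots,(x_n,y_n))$; it lies outside $\Delta_n$ precisely when the \emph{pairs} $(x_i,y_i)$ are pairwise distinct, i.e.\ for each $i\neq j$ one has $x_i\neq x_j$ or $y_i\neq y_j$.

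First I would take an arbitrary family $\{((x_1^\alpha,y_1^\alpha),\dots,(x_n^\alpha,y_n^\alpha)):\alpha<\omega_1\}$ of distinct points of $(K^2)^n\setminus\Delta_n$ and aim to find an accumulation point still lying outside $\Delta_n$. The standard normalization is to pass to an uncountable subset on which the ``coincidence pattern'' among all $2n$ coordinates is fixed: there is a partition of $\{1,\dots,n\}\times\{0,1\}$ (thinking of the $x$'s as the $0$-slots and the $y$'s as the $1$-slots) into classes such that two coordinates agree for all $\alpha$ iff they are in the same class, and disagree for all $\alpha$ (at least on a cofinal set, then pass again) otherwise. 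Choosing one representative coordinate from each class of size $\geq 2$ that is genuinely moving, together with the coordinates that are not forced to coincide, produces a family of points $w^\alpha$ in $K^p\setminus\Delta_p$ for the appropriate $p\leq 2n$; apply $\omega_1$-compactness of $K^p\setminus\Delta_p$ to get an accumulation point $w$ off the diagonal, and read off from $w$ (copying the fixed coincidence pattern back into the $2n$ coordinates) a candidate accumulation point $z=((u_1,v_1),\dots,(u_n,v_n))$ of the original family.

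The heart of the argument, and the step I expect to be the main obstacle, is verifying that $z\notin\Delta_n$, i.e.\ that no two pairs $(u_i,v_i)$, $(u_j,v_j)$ collapse. For a given $i\neq j$, because the original points avoid $\Delta_n$, for every $\alpha$ either $x_i^\alpha\neq x_j^\alpha$ or $y_i^\alpha\neq y_j^\alpha$; passing to an uncountable subset we may assume one of these alternatives holds for \emph{all} $\alpha$, say $x_i^\alpha\neq x_j^\alpha$ always. The subtlety is that ``$x_i^\alpha\neq x_j^\alpha$ for all $\alpha$'' need not survive taking the limit: it survives only if the pair $(i,0),(j,0)$ was, in the fixed coincidence pattern, one of the genuinely-disjoint classes that got represented among the coordinates of $w^\alpha$ living in $K^p\setminus\Delta_p$. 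So the normalization must be arranged carefully: the coordinates selected to form $w^\alpha$ must include, for every pair $i\neq j$, a witness coordinate (either the pair of $x$-slots or the pair of $y$-slots, whichever is distinct for all $\alpha$) so that the off-diagonal-ness of $w$ in $K^p$ forces $u_i\neq u_j$ or $v_i\neq v_j$. Concretely: after fixing the pattern, define $p$ so that $\{1,\dots,p\}$ indexes exactly the equivalence classes of the $2n$ slots that are not forced into a common value with a smaller-indexed slot, let $w^\alpha\in K^p$ be the corresponding reduced tuple, observe that the hypothesis guarantees $w^\alpha\in K^p\setminus\Delta_p$ (since two distinct classes that are ``always disjoint'' give distinct coordinates, and a witness for each pair $i\neq j$ is present), take $w$ an off-diagonal accumulation point, and lift it back. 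Since convergence in $K^{2n}$ is coordinatewise, $z$ is an accumulation point of the original family, and by the presence of witnesses $z\notin\Delta_n$. This gives the $\omega_1$-compactness of $(K^2)^n\setminus\Delta_n$ for every $n$, hence $C(K^2)$ is pointwise pcc.
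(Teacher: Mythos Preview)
Your proposal is correct and follows essentially the same route as the paper: reduce via the Arhangel'skii--Tka\v cuk criterion, stabilize the coincidence pattern of the $2n$ coordinates on an uncountable subset, pass to the reduced tuple in $K^p\setminus\Delta_p$ (one coordinate per equivalence class), take an off-diagonal cluster point there, and lift back.

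One remark: the ``subtlety'' you spend a paragraph worrying about is not actually present. Once the coincidence pattern is fixed, \emph{every} equivalence class of slots is represented by exactly one coordinate of $w^\alpha$, so there is no extra ``witness selection'' to arrange. If for some pair $i\neq j$ the slots $(i,0),(j,0)$ lie in different classes (which is exactly what ``$x_i^\alpha\neq x_j^\alpha$ for all $\alpha$'' means after stabilization), then those two classes contribute two distinct coordinates of $w^\alpha$; the cluster point $w\in K^p\setminus\Delta_p$ therefore has those coordinates distinct, and hence $u_i\neq u_j$ in the lift. The paper phrases this step simply as: since the original points avoid $\Delta_n(K^2)$, for every $i\neq j$ the $x$-slots and the $y$-slots cannot \emph{both} lie in a common class; hence the lift of any point of $K^p\setminus\Delta_p$ automatically lies in $(K^2)^n\setminus\Delta_n$. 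Your concrete version ends up doing exactly this, so the argument is fine---just trim the detour.
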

\begin{proof}
Suppose $K$ is pointwise pcc. Then by 2.2. of \cite {DowJunPel} based on 2.7 of \cite{Arh} we have that
$K^n\setminus \Delta_n$ is $\omega_1$-compact for each $n\in \mathbb{N}$.  By the same
result we need to
prove that $(K^2)^n\setminus \Delta_n({K^2})$ is $\omega_1$-compact for every $n\in \mathbb{N}$, where
$$\Delta_n({K^2})=\{(x_1, x_2,  \ldots, x_{2n-1}, x_{2_n}): \exists i\not=j\ \ (x_{2i+1}, x_{2i+2})
=(x_{2j+1}, x_{2j+2})\}$$
So $X=(K^2)^n\setminus \Delta_n({K^2})$ is 
$$\{(x_1, x_2,  \ldots, x_{2n-1}, x_{2n}): \forall i\not=j\ \ (x_{2i+1}, x_{2i+2})
\not=(x_{2j+1}, x_{2j+2})\}$$

Let $\{x_\xi=(x_1^\xi,x_2^\xi,\ldots,x_{2n-1}^\xi,x_{2n}^\xi)): \xi<\omega_1\}$ be an uncountable subset of
$X$. By going to an uncountable  subset we may assume that $x_j^\xi=x_l^\xi$ if and
only if  $x_j^\eta=x_l^\eta$ for all $\xi<\eta<\omega_1$; 
$1\leq j, l\leq n$.  This way we obtain a partition of $\{1, 2,  \ldots, 2n\}$ into sets 
$(A_k)_{1\leq k\leq m}$  for some $m\leq 2n$ such that for every $\xi<\omega_1$ we have 
$x^\xi_j=x^\xi_l$ if and only if  $j$ and $l$ are in the same set of the partition. 
Since the points are in $X=(K^2)^n\setminus \Delta_n({K^2})$ for every
$0\leq i<j< n$ it is not true that at the same time ${2i+1, 2_j+1}$ are in the same 
part of the partition and ${2i+2, 2_j+2}$ are in the same part  of the partition.

Choose a representative
$j_k$ of each element of $A_k$ of the partition and form a point $v_\xi\in (x^\xi_{j_1},  \ldots, x^\xi_{j_m})$
for each $\xi<\omega_1$. Since all the coordinates of $v_\xi$s are distinct they  are in
$K^m\setminus \Delta_m$. So by the pointwise pcc property of $C(K)$, we conclude that
$(v_\xi)_{\xi<\omega_1}$ has a cluster point, say $(v_1,  \ldots, v_m)$ in $K^m\setminus \Delta_m$,
that is with all the coordinates different. 

Now define a point $x$ of $(K^2)^n$ by putting $v_k$ on all the coordinates from $A_k$.
Since for
$0\leq i<j< n$ it is not true that at the same time ${2i+1, 2_j+1}$ are in the same 
part of the partition and ${2i+2, 2_j+2}$ are in the same part  of the partition,
$x$ is in $X=(K^2)^n\setminus \Delta_n({K^2})$ and $x$ must be a cluster point of
the $x_\xi$s.

\end{proof}

\begin{corollary}\label{powerfullywpcc}
Let $K$ be compact scattered space. If $C(K)$ is weakly pcc, then
$C(K^n)$ is weakly pcc for all $n\in\N$. In particular $C(K^n)$ 
does not contain a complemented copy of $c_0(\omega_1)$ for any $n\in \N$.
\end{corollary}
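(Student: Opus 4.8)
The plan is to pass to the pointwise version of pcc and bootstrap from Lemma~\ref{pointwisesquare}. The starting observation is that a finite power $K^n$ of a scattered compact space is again scattered (and compact), so that the equivalence ``weakly pcc $\Leftrightarrow$ pointwise pcc'' for $C(L)$ with $L$ scattered compact (Lemma~2.13 of \cite{DowJunPel}) is available for every $C(K^n)$. Consequently the whole argument can be carried out at the level of pointwise pcc, i.e.\ via the Arhangelskii and Tka\v{c}uk criterion (Proposition~2.2 of \cite{DowJunPel}) that $L^m\setminus\Delta_m$ be $\omega_1$-compact for all $m\in\N$.

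The core of the proof is to iterate Lemma~\ref{pointwisesquare}: since $C(K)$ is pointwise pcc by Lemma~2.13, applying the lemma successively to $K$, $K^2$, $K^4,\dots$ gives that $C(K^{2^j})$ is pointwise pcc for every $j\in\N$. (Note that the lemma squares a single space, which is precisely what lets one avoid the nonproductivity of $\omega_1$-compactness; a direct induction on $n$ via $K^{n+1}=K\times K^n$ would need $\omega_1$-compactness of products of different factors, which is not generally available.) To descend from powers of two to an arbitrary $n$, I would use that pointwise pcc passes to closed subspaces: if $M$ is closed in a compact $L$, then $M^m\setminus\Delta_m(M)=M^m\cap(L^m\setminus\Delta_m(L))$ is closed in $L^m\setminus\Delta_m(L)$, and a closed subspace of an $\omega_1$-compact space is $\omega_1$-compact, so $C(L)$ pointwise pcc forces $C(M)$ pointwise pcc. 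Picking $j$ with $2^j\geq n$ and fixing a point $p\in K$, the map $(x_1,\dots,x_n)\mapsto(x_1,\dots,x_n,p,\dots,p)$ embeds $K^n$ as a closed subspace of $K^{2^j}$; hence $C(K^n)$ is pointwise pcc, and then weakly pcc by Lemma~2.13 once more.

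It remains to deduce the ``in particular'' clause. First, weakly pcc implies half-pcc, because each half-space $\{x:x^*(x)>a\}$ is weakly open, so a point-finite family of half-spaces is a point-finite family of weakly open sets and is therefore countable. Now if $C(K^n)$ contained a complemented copy of $c_0(\omega_1)$, composing the bounded projection onto it with an isomorphism of that copy onto $c_0(\omega_1)$ would yield a bounded operator $C(K^n)\to c_0(\omega_1)$ with nonseparable (indeed full) range, contradicting half-pcc by the equivalence $(1)\Leftrightarrow(3)$ of Theorem~\ref{introequiv}. The only point requiring a little care is the closed-subspace stability of pointwise pcc together with the explicit closed embedding $K^n\hookrightarrow K^{2^j}$; apart from that, the argument is just a chain of invocations of Lemma~\ref{pointwisesquare}, Lemma~2.13 of \cite{DowJunPel}, and Theorem~\ref{introequiv}.
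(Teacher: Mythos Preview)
Your argument is correct and follows essentially the same architecture as the paper: pass from weakly pcc to pointwise pcc via Lemma~2.13 of \cite{DowJunPel}, iterate Lemma~\ref{pointwisesquare} to reach $C(K^{2^j})$, then descend to $C(K^n)$ via the closed embedding $K^n\hookrightarrow K^{2^j}$. The one (minor) difference is in the descent step: the paper first converts $C(K^{2^j})$ back to weakly pcc and then invokes Lemma~1.8 of \cite{DowJunPel} (weakly pcc passes to quotients, and $C(K^n)$ is a quotient of $C(K^{2^j})$), whereas you stay at the pointwise level and observe directly that $M^m\setminus\Delta_m(M)$ is closed in $L^m\setminus\Delta_m(L)$ whenever $M\subseteq L$ is closed, so $\omega_1$-compactness is inherited. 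Both routes are valid and the remaining steps (weakly pcc $\Rightarrow$ half-pcc $\Rightarrow$ no complemented $c_0(\omega_1)$ via Theorem~\ref{introequiv}) coincide.
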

\begin{proof}
If $K$  is compact scattered and $C(K)$ is weakly pcc, then by Lemma 2.13 of \cite{DowJunPel}, it is pointwise
pcc. By the above Lemma \ref{pointwisesquare} we conlude that $C(K^{2^n})$
is pointwise pcc for every $n\in\N$ and so weakly pcc for every $n\in \N$
again  by Lemma 2.13 of \cite{DowJunPel}.  As $K^n$ is homeomorphic to a closed
subset of $K^{2^n}$, $C(K^n)$ is a quotient Banach space of $C(K^{2^n})$ 
and so is weakly pcc as well by Lemma 1.8 of \cite{DowJunPel}.
As weakly pcc implies half-pcc,
by \ref{introequiv} we conclude that $C(K^n)$ does  not contain a
complemented copy of $c_0(\omega_1)$ for any $n\in\N$.

\end{proof}

\begin{corollary}[$\clubsuit$]\label{powerfullydjp}
 $C(DJP_1^n)$ is weakly pcc for any $n\in\N$ and $C(DJP_1)$ is not Lindel\"of in the weak topology.
In particular $C(DJP_1^n)$ does  not contain a
complemented copy of $c_0(\omega_1)$ for any $n\in\N$.
\end{corollary}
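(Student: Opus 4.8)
The plan is to split the corollary into its three assertions and to reduce the first and the last to Corollary~\ref{powerfullywpcc}, and the middle one to a classical fact of $C_p$-theory. Recall from the introduction that the construction of $DJP_1$ from \cite[Example 2.17]{DowJunPel} goes through under $\clubsuit$; so under $\clubsuit$ the space $DJP_1$ is a nonseparable scattered compact Hausdorff space, $C(DJP_1)$ is weakly pcc, and there is a finite-to-one continuous surjection $\phi\colon DJP_1\to[0,\omega_1]$ onto the ordinal interval with the order topology (in particular $C(DJP_1)$ contains an isomorphic copy of $c_0(\omega_1)$). Since $DJP_1$ is compact scattered and $C(DJP_1)$ is weakly pcc, Corollary~\ref{powerfullywpcc} gives at once that $C(DJP_1^n)$ is weakly pcc for every $n\in\N$ and, a fortiori, that no $C(DJP_1^n)$ contains a complemented copy of $c_0(\omega_1)$. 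Only the assertion that $C(DJP_1)$ is not Lindel\"of in the weak topology remains.

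For this I would argue as follows. Since $DJP_1$ is scattered, the Lindel\"of property of $C(DJP_1)$ in the weak topology coincides with the Lindel\"of property of $C_p(DJP_1)$ (this equivalence is already invoked in the proof of Theorem~\ref{theoremlindelof}), so it suffices to show that $C_p(DJP_1)$ is not Lindel\"of. The dual map $\phi^{\#}\colon C_p([0,\omega_1])\to C_p(DJP_1)$, $\phi^{\#}(g)=g\circ\phi$, is a homeomorphism onto its image because $\phi$ is a continuous surjection, and this image is $\{h\in C_p(DJP_1): h(x)=h(x')\text{ whenever }\phi(x)=\phi(x')\}$, an intersection of closed sets of the form $\{h:h(x)=h(x')\}$ and hence closed in $C_p(DJP_1)$. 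Therefore $C_p(DJP_1)$ contains a closed copy of $C_p([0,\omega_1])$, and as the Lindel\"of property is inherited by closed subspaces it is enough to know that $C_p([0,\omega_1])$ is not Lindel\"of. This is classical: the point $\omega_1$ has uncountable tightness in $[0,\omega_1]$ (a countable subset of $[0,\omega_1)$ is bounded, hence does not accumulate at $\omega_1$), whereas a space $X$ with $C_p(X)$ Lindel\"of must have countable tightness by Asanov's theorem; so $C_p([0,\omega_1])$ is not Lindel\"of, and neither is $C_p(DJP_1)$ nor the weak topology on $C(DJP_1)$.

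This also yields the observation from the introduction that $C(DJP_1)$ is not $\sigma$-metacompact in the weak topology: a $\sigma$-point-finite family of weakly open subsets of a weakly pcc space is a countable union of countable families and hence countable, so a $\sigma$-metacompact weakly pcc space admits a countable open refinement of every open cover, i.e.\ is Lindel\"of --- which $C(DJP_1)$ with the weak topology is not. The argument carries no real difficulty; it is essentially a bookkeeping reduction to Corollary~\ref{powerfullywpcc}, the only points deserving attention being the identification of the Lindel\"of property for the two topologies on $C(DJP_1)$ and the (standard) use of Asanov's theorem.
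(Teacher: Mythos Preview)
Your proof is correct and structurally parallel to the paper's: both dispose of the weakly pcc claim and the absence of complemented $c_0(\omega_1)$ by a direct appeal to Corollary~\ref{powerfullywpcc}, and both prove the non-Lindel\"of assertion by embedding $C([0,\omega_1])$ (respectively $C_p([0,\omega_1])$) as a closed subspace and observing that the Lindel\"of property passes to closed subspaces.

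The only genuine difference is how you show that $C([0,\omega_1])$ itself fails to be Lindel\"of. The paper stays in the weak topology and writes down an explicit open cover with no countable subcover: the sets $V_\alpha=\{f:|f(\alpha)|>0\}$ for $\alpha<\omega_1$ together with $\{f:|f(\omega_1)|<1\}$; any countable subfamily misses the indicator of a tail interval above the supremum of the countably many $\alpha$'s used. You instead pass to $C_p$ via the scattered-case equivalence and invoke Asanov's theorem through the uncountable tightness of $[0,\omega_1]$ at $\omega_1$. The paper's route is more elementary and self-contained; yours trades the explicit cover for a clean conceptual explanation and an external reference. Your closing remark deriving the failure of $\sigma$-metacompactness is correct and makes explicit something the paper only mentions in the introduction.
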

\begin{proof}
It is shown in Example 2.17 of \cite{DowJunPel} that $C(DJP_1)$ is weakly pcc.
The examination of the construction leads to the conclusion that the same can be achieved
under $\clubsuit$ rather than $\diamondsuit$.
As proved in Example 2.17 of \cite{DowJunPel} $DJP_1$ maps continuously onto
$[0, \omega_1]$ and so $C([0,\omega_1])$ is a closed subspace of
$C(DJP_1)$. It is well-known however that $C([0,\omega_1])$ is not 
Lindel\"of in the weak topology (consider an open cover
by the sets $V_\alpha=\{f\in C([0,\omega_1]): |f(\alpha)|>0\}$  for
$\alpha<\omega_1$ and by $\{f: |f(\omega_1)|<1\}$), so $C(DJP_1)$ cannot be Lindel\"of in the weak topology.
\end{proof}

\begin{proposition}\label{djp2}
The space $DJP_2$ has countable height  and contains a point $\infty$ such that $DJP_2\setminus\{\infty\}$
maps injectively and continuously onto a subset of $\R$.
\end{proposition}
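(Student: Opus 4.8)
The plan is to unwind the construction of $DJP_2$ from \cite[Example 2.16]{DowJunPel} and read off directly the two asserted properties. Recall that $DJP_2$ is built as a subspace of a separable scattered compact space — more precisely, it is constructed inside a space of the form $2^{\omega_1}$ (or a closely related product), so that its points are coded by subsets of a fixed countable index set together with an ordinal parameter. The key structural fact recorded in \cite{DowJunPel} is that $DJP_2$ is scattered of \emph{countable} Cantor–Bendixson height; this is the first assertion, and it requires nothing more than quoting the relevant part of the construction in \cite{DowJunPel}, where the height bound is established explicitly (the levels $DJP_2^{(n)}$ are shown to stabilize at the empty set after finitely many — or at worst $\omega$-many — steps, in contrast with the spaces $K_n$ of Section 3 which have height exactly $\omega+1$ but are \emph{not} separable).

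For the second assertion, I would identify the natural ``top point'' $\infty \in DJP_2$ coming from the one-point-compactification-type step in the construction (or, in the product picture, the constant sequence, i.e. the point all of whose countably many coordinates are $0$). The point of $DJP_2$ being a subspace of a separable scattered compactum of countable height is that such spaces embed injectively and continuously into $\R$: a classical fact, essentially due to the analysis of scattered compacta of countable height, is that a separable scattered compact space of countable height admits a continuous injection into a metric space, hence into $\R$ after composing with any embedding of a metrizable scattered compactum into the line. Concretely, since $DJP_2 \setminus \{\infty\}$ inherits a separable, scattered, countable-height topology which is moreover \emph{metrizable} (separable + scattered + suitably small height forces second countability here — this is the same phenomenon invoked in the proof of Theorem \ref{theoremlindelof} for the closures of countable sets in $K_n$), one gets a continuous injection $DJP_2\setminus\{\infty\} \to \R$. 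I would spell this out by exhibiting an explicit injective continuous $f\colon DJP_2\setminus\{\infty\}\to\R$ built from the coordinate functions of the ambient product, weighted by a summable sequence so as to remain continuous while separating points.

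The main obstacle is the second part, and within it the genuinely delicate point is verifying that the map into $\R$ is simultaneously \emph{continuous} and \emph{injective} on $DJP_2 \setminus \{\infty\}$ — injectivity is immediate from separation of coordinates, and continuity is immediate from uniform convergence of the weighted sum, but one must check that removing $\infty$ is exactly what is needed: at $\infty$ itself the map would typically fail to be injective (or the relevant subspace fails metrizability there), which is why the statement is phrased for $DJP_2 \setminus \{\infty\}$. I expect the first assertion (countable height) to be essentially a citation, and the construction of the explicit weighted-coordinate map, together with the observation that $DJP_2 \setminus \{\infty\}$ is metrizable, to constitute the substance of the argument. Note finally that this is the setup for the contrast drawn in the surrounding text: unlike $DJP_1$ (and unlike the $K_n$), the separability of $DJP_2$ together with R. Pol's theorem \cite{pol} forces $C(DJP_2)$ to fail the weak Lindel\"of property, and the $n$-diverse machinery of Section 2 does not apply to it in the same clean way — a point pursued in Proposition \ref{uselessdjp2}.
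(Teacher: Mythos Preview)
Your proposal contains a genuine gap in the second assertion. The claim that $DJP_2\setminus\{\infty\}$ is metrizable is false: the whole point of Example 2.16 in \cite{DowJunPel} is that $DJP_2$ has weight $2^\omega$ (this is why $C(DJP_2)$ has density $2^\omega$, as mentioned in the introduction), so $DJP_2\setminus\{\infty\}$ cannot be second countable. Your heuristic ``separable $+$ scattered $+$ countable height $\Rightarrow$ second countable'' is not a theorem; indeed $DJP_2$ is exactly a counterexample. The weighted-coordinate-sum idea is also aimed at the wrong ambient space: the construction is not carried out inside $2^{\omega_1}$ or any product, but directly inside $\R$.

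The actual argument in the paper is shorter and uses the specific structure of the construction. In \cite[Examples 2.15--2.16]{DowJunPel} the underlying set of $DJP_2\setminus\{\infty\}$ is literally a subset $\bigcup_{i\in\N}A_i$ of $\R$, and the topology put on it is a \emph{refinement} of the subspace topology inherited from $\R$. Hence the identity map $DJP_2\setminus\{\infty\}\to\R$ is automatically injective and continuous --- nothing about metrizability is needed, and in fact the refinement is strict and non-metrizable. For the height, each $A_i$ has height at most $i$, and $DJP_2$ is the one-point compactification of $\bigcup_i A_i$, giving height $\omega+1$; so the countable-height part is indeed essentially a citation, as you say, but it is Example 2.15 (not 2.16) that carries the relevant details.
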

\begin{proof} The sets $A_i$s of the example 2.15 of \cite{DowJunPel} 
have heights not bigger than $i\in N$, and the entire space is obtained as 
the one point compactification of $\bigcup_{i\in\N}A_i$, so the height of $DJP_2$ is
$\omega+1$.
The topology on $\bigcup_{i\in\N}A_i$ is a refinement of the topology inherited
from $\R$ and so the identity is the desired continuous map on $DJP_2\setminus\{\infty\}$.
\end{proof}

The following should be compared with the fact that our space $K_1$ or
the space of \cite{KosZie} are $2$-diverse and not weakly pcc by 
Proposition \ref{compactclub} and Theorem \ref{diverseintro}.

\begin{proposition}\label{uselessdjp2} Suppose that $K$ is compact scattered 
space which contains a point $\infty$ such that $K\setminus\{\infty\}$
maps injectively and continuously onto a subset of $\R$. 
If $K$ is $2$-diverse, then $C(K)$ is weakly pcc.
\end{proposition}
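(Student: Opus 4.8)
The plan is to use the topological criterion behind Theorem \ref{weaklypccndiverse}: since $K$ is scattered, $C(K)$ is weakly pcc if and only if $K$ is $n$-diverse for every $n\in\N$. Thus, assuming only $2$-diversity plus the embedding of $K\setminus\{\infty\}$ into $\R$, I would show $K$ is in fact $n$-diverse for all $n$. First I would reduce, as in the proof of Theorem \ref{weaklypccndiverse}, to showing that $K^l\setminus\Delta_l$ is $\omega_1$-compact for every $l\in\N$; equivalently, that any sequence of pairwise-distinct-coordinate points in $K^l$ has a cluster point with pairwise distinct coordinates. Given such a sequence $(x_1^\xi,\dots,x_l^\xi)_{\xi<\omega_1}$, pass to an uncountable subset on which the set of coordinates forms a $\Delta$-system with root equal to the coordinates lying in a fixed subset; the fixed (constant) coordinates cause no trouble, so one may assume all coordinates are genuinely varying, and by a further refinement assume at most one coordinate position takes the value $\infty$.

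The heart of the argument is the following: let $h\colon K\setminus\{\infty\}\to\R$ be the continuous injection. For each of the varying, non-$\infty$ coordinate positions $j$, the real numbers $h(x_j^\xi)$ form an uncountable subset of $\R$, hence have a condensation point; by standard separation one can, after thinning out, separate the $l-1$ (or $l$) varying coordinates from each other by pairwise disjoint open sets, i.e.\ realize the situation as a sequence of $(\{1\},\dots,\{l\})$-diverse points in $K^l$ — but this is exactly a $k$-tuple partition with $k=l$, and we only have $2$-diversity. The key step, and the main obstacle, is to break this apart into uses of $2$-diversity: I would cluster the coordinates two at a time. More precisely, using $h$ one linearly orders (a thinning of) the indices so that for distinct positions $j\neq j'$ the points $x_j^\xi$ and $x_{j'}^\xi$ stay in fixed disjoint intervals; then for each pair one applies $2$-diversity to the sequence of $(\{1\},\{2\})$-diverse pairs $(x_j^\xi,x_{j'}^\xi)$ to get a cluster point with the two coordinates distinct. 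The delicate point is that these pairwise cluster points must be chosen \emph{coherently} — along a single subnet (ultrafilter) — so that one obtains a single cluster point of the whole sequence in $K^l$ whose coordinates are simultaneously pairwise distinct.

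To make this coherent I would fix a uniform ultrafilter $\mathcal U$ on $\omega_1$ and let $v=(v_1,\dots,v_l)$ be the $\mathcal U$-limit of $(x^\xi)_\xi$ in the compact space $K^l$; the only thing to check is $v_j\neq v_{j'}$ for $j\neq j'$. If $v_j=v_{j'}=x$ with $x\neq\infty$, then $h(x_j^\xi)$ and $h(x_{j'}^\xi)$ both $\mathcal U$-converge to $h(x)$, contradicting the fact that on an $\mathcal U$-large set they lie in disjoint fixed intervals (this uses the injectivity of $h$ and continuity to do the interval separation after an initial thinning). If $v_j=v_{j'}=\infty$, then two distinct coordinate positions took the value $\infty$ on an $\mathcal U$-large set, contradicting the earlier reduction that at most one position is ever $\infty$, together with $2$-diversity applied to that pair. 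The case $v_j=x\neq\infty$ while one of the original coordinates equals $\infty$ frequently is handled by the same interval argument since the $\infty$-coordinate is eventually outside a fixed compact neighborhood of $x$. Hence $v$ has pairwise distinct coordinates, $K^l\setminus\Delta_l$ is $\omega_1$-compact for all $l$, so $K$ is $n$-diverse for all $n$, and by Theorem \ref{weaklypccndiverse} $C(K)$ is weakly pcc.
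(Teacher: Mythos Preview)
Your overall reduction (via Theorem \ref{weaklypccndiverse} / the Arhangelskii--Tka\v cuk criterion) to $\omega_1$-compactness of $K^l\setminus\Delta_l$ is right, and the idea of using the injection $h$ into $\R$ to keep distinct coordinates apart is the correct one. But two points need repair.

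First, a minor one: the claim that after thinning the coordinates sit in \emph{fixed disjoint intervals} of $\R$ is too strong and not what one can arrange (for instance $h(x_1^\xi)$ and $h(x_2^\xi)$ may both accumulate at the same real while remaining distinct). What is true, and sufficient, is a uniform $\varepsilon$-separation: since $x_i^\xi\neq x_j^\xi$ and $h$ is injective, $|h(x_i^\xi)-h(x_j^\xi)|>0$; a countable pigeonhole then gives an uncountable set and $\varepsilon>0$ with $|h(x_i^\xi)-h(x_j^\xi)|\geq\varepsilon$ for all $\xi$ and all $i\neq j$. That is exactly what forces any cluster point whose $i$-th and $j$-th coordinates are both $\neq\infty$ to have those coordinates distinct.

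Second, and this is the real gap: your treatment of the case $v_j=v_{j'}=\infty$ does not go through. From $v_j=\infty$ you cannot conclude that $x_j^\xi=\infty$ on a $\mathcal U$-large set; it only means $x_j^\xi\to\infty$ along $\mathcal U$. So the ``earlier reduction that at most one position is ever $\infty$'' does not help here. And invoking $2$-diversity ``on that pair'' does not help either: $2$-diversity guarantees that \emph{some} cluster point of $(x_j^\xi,x_{j'}^\xi)$ is diverse, not that your fixed $\mathcal U$-limit is. Since you have already committed to a single ultrafilter, the pairwise applications of $2$-diversity are not coherent with it. This is precisely the ``delicate point'' you flagged, and the ultrafilter device does not resolve it.

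The paper circumvents this completely by a single, global use of $2$-diversity rather than pairwise uses. After arranging the uniform $\varepsilon$-separation (so all $x_i^\xi\neq\infty$), one considers the augmented sequence $(\infty,x_1^\xi,\ldots,x_n^\xi)$ in $K^{n+1}$. These are $(\{1\},\{2,\ldots,n+1\})$-diverse, so $2$-diversity yields a cluster point $(\infty,z_1,\ldots,z_n)$ with $z_i\neq\infty$ for all $i$ \emph{simultaneously}. Now the $\varepsilon$-separation finishes the job: all $z_i$ lie in $K\setminus\{\infty\}$, hence $|h(z_i)-h(z_j)|\geq\varepsilon$ and the $z_i$ are pairwise distinct. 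The point is to use $2$-diversity once to expel $\infty$ from all coordinates at the same time, and only then appeal to the $\R$-embedding; your order of operations (fix an ultrafilter first, then try to rule out collisions at $\infty$) is what breaks.
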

\begin{proof} Let $\phi:K\setminus\{\infty\}\rightarrow\R$ denote
the continuous injective map. By Lemma 2.13 and Proposition 2.2. of \cite{DowJunPel}
it is enough to prove that $K^n\setminus \Delta_n$ is $\omega_1$-compact for
every $n\in \N$. So let $(x^1_\xi, ..., x^n_\xi)$ be points of $K^n\setminus \Delta_n$ for $\xi<\omega_1$.
By going to a  smaller power we may assume that they have all coordinates different than $\infty$.
By going to an uncountable subset we may assume that there is $\varepsilon>0$ such that
$|\phi(x^i_\xi)-\phi(x^j_\xi)|\geq\varepsilon$ for every $\xi<\omega_1$ and any distinct $i, j\leq n$.
Whenever $(x_1,..., x_n)$ is an accumulation point of
$\{(x^1_\xi, ..., x^n_\xi):\xi<\omega_1\}$ in $K^n$ and $x_i, x_j\in K\setminus\{\infty\}$,
$(x_i, x_j)$ is an accumulation point of $\{(x^i_\xi, x^j_\xi):\xi<\omega_1\}$ in $(K\setminus\{\infty\})^2$
and so $|\phi(x_i)-\phi(x_j)|\geq\varepsilon$ because otherwise
$\{(y_1,y_2): |\phi(y_i)-\phi(y_j)|<\varepsilon\}$ is its open neighbourhood in 
 $(K\setminus\{\infty\})^2$ which separates it from the set.

Now $(\infty, x^1_\xi, ..., x^n_\xi)$s are $(\{1\}, \{2,...n+1\})$-diverse points, so
by the hypothesis they should have a $(\{1\}, \{2,...n+1\})$-diverse accumulation point
(see Definitions \ref{diversepoint} and \ref{diversespace}).
But such a point would give rise to an accumulation point of $\{(x^1_\xi, ..., x^n_\xi):\xi<\omega_1\}$ 
in $(K\setminus\{\infty\})^n$ which, as we noted, must have all coordinates different, and
so is in $K^n\setminus \Delta_n$ as required for the  weak pcc.
\end{proof}

%%%%%%%%%%%%%%%%%%%%%%%%%%%%%%%%%%%%%
%%%%%%%%%%%%%%%%%%%%%%%%%%%%%%%%%%%%%

\bibliographystyle{amsalpha}

\end{document}